\theoremstyle{oupplain}
\newtheorem{theorem}{Theorem}[section]
\newtheorem{maintheorem}{Theorem}
\newtheorem{lemma}[theorem]{Lemma}
\theoremstyle{oupdefinition}
\theoremstyle{oupremark}
\newtheorem{remark}[theorem]{Remark}
\newtheorem{example}[theorem]{Example}
\theoremstyle{oupproof}
\newtheorem{proof}{Proof}
\newtheorem{altproof}{Proof of Theorem \ref{thm:Gkl(n,f)spine} }
\newcommand{\Z}{\mathbb{Z}}
\newcommand{\pres}[2]{\langle {#1}\ |\ {#2} \rangle}
\begin{document}

\begin{Frontmatter}

\title[3-manifold spine cyclic presentations with seldom seen Whitehead graphs]{3-manifold spine cyclic presentations with seldom seen Whitehead graphs}

\author{Gerald Williams}

\authormark{Gerald Williams}

\address{\orgname{School of Mathematics, Statistics and Acturial Science, University of Essex}, \orgaddress{\city{Colchester, Essex CO4 3SQ}, \country{U.K.}}
\email{gerald.williams@essex.ac.uk}}

\keywords[AMS subject classification]{20F05, 57M05, 57M50}

\keywords{3-manifold spine, Whitehead graph, cyclically presented group, Fractional Fibonacci group}

\abstract{We consider a family of cyclic presentations and show that, subject to certain conditions on the defining parameters, they are spines of closed 3-manifolds. These are new examples where the reduced Whitehead graphs are of the same type as those of the Fractional Fibonacci presentations; here the corresponding manifolds are often (but not always) hyperbolic. We also express a lens space construction in terms of a class of positive cyclic presentations that are spines of closed 3-manifolds. These presentations then furnish examples where the Whitehead graphs are of the same type as those of the positive cyclic presentations of type $\mathfrak{Z}$, as considered by McDermott.}

\end{Frontmatter}

\section{Introduction}\label{sec:intro}

The \em cyclically presented group \em $G_n(w)$ is the group defined by the \em cyclic presentation \em
\[ \mathcal{G}_n(w) = \pres{x_0,\ldots, x_{n-1}}{w,\theta(w),\ldots, \theta^{n-1}(w)}\]
where $w(x_0,\ldots ,x_{n-1})$ is a word in the free group $F_n$ with generators $x_0,\ldots ,x_{n-1}$ and $\theta : F_n\rightarrow F_n$ is the \em shift automorphism \em of $F_n$ given by $\theta(x_i)=x_{i+1}$ for each $0\leq i<n$ (subscripts $\bmod~n$, $n>0$). Cyclic presentations that are spines of 3-manifolds have been widely researched, with notable early studies by Dunwoody \cite{Dunwoody}, Sieradski \cite{Sieradski}, Helling, Kim, Mennicke \cite{HKM}, and Cavicchioli and Spaggiari \cite{CavicchioliSpaggiari}.

A necessary condition for a presentation to be a spine of a closed 3-manifold is that its Whitehead graph is planar (see, for example, \cite[page 33]{AngeloniMetzler}, \cite[Documentation, Section 11]{Berge}). The planar, reduced, Whitehead graphs of cyclic presentations were classified in \cite{HowieWilliamsPlanar}, whose labelling  (I.$j$),(II.$j$),(III.$j$) we now adopt. (Types (I.5) and (II.6) are precisely the graphs that correspond to positive or negative cyclic presentations.) In the overwhelming majority of studies of cyclic presentations as spines of 3-manifolds (for example \cite{
AnkaraliogluAydin,
AGMTorus,
CattabrigaMulazzaniStronglyCyclic,
CattabrigaMulazzani,
CattagbrigaMulazzaniReps11knots,
CattagbrigaMulazzaniVesnin,
CHK,
CRStopprop,
CHKTorus,
CavicchioliRuniSpaggiariConjecture,
Dunwoody,
GrasselliMulazanniSeifert11,
GrasselliMulazzani,
HowieWilliamsMFD,
KimKimVesnin98,
KimKimTorusKnots3mfds,
KimKim2bridge,
KimKimPolynomial,
KimKimDualMirror,
KimKimGeneralized,
McDermott,
Mulazzni11Dunwoody,
RSV03,
Sieradski,
SpaggiariTelloni,
SpaggiariNeuwirth,
Telloni10})
the reduced Whitehead graph is of one of the types (I.1), (I.3) or (I.5), which correspond to the graphs given by Dunwoody in {\cite[Figure 1]{Dunwoody}} and, as such, provide examples of so-called \em Dunwoody manifolds\em. By \cite{CattabrigaMulazzani} and \cite{GrasselliMulazzani} the class of Dunwoody manifolds is exactly the class of strongly-cyclic branched covers of $(1,1)$-knots. Strictly speaking, the hypothesis $ab\neq 0$ of \cite{Dunwoody}, which is removed in many later references (including \cite{CattabrigaMulazzani,GrasselliMulazzani}), excludes Whitehead graphs of type (I.5) and for this reason no positive presentations appear in \cite[Table 1]{Dunwoody}. However, an explicit family of cyclic presentations that are spines of 3-manifolds and where the reduced Whitehead graph is of this type are given by the positive presentations of type $\mathfrak{Z}$ considered in \cite[Theorem 8]{McDermott} (that is, those with $s<0$). Cyclic presentations that are spines of 3-manifolds where the reduced Whitehead graph is of a different type to (I.1), (I.3), (I.5) are few and far between. We are only aware of the following families of such presentations. The non-cyclically reduced cyclic presentation $\mathcal{G}_n(x_0x_1x_1^{-1})$ is a spine of $S^3$ for all $n\geq 1$ (\cite[Lemma 9]{HowieWilliamsMFD}). Its Whitehead graph contains loops, but once these are removed the graph is of type (I.4). Helling, Kim and Mennicke \cite{HKM} and Cavicchioli and Spaggiari \cite[Theorem 3]{CavicchioliSpaggiari} showed that, for even $n$, the Fibonacci presentations $\mathcal{F}(n)=\mathcal{G}_n(x_0x_1x_2^{-1})$ are spines of closed, oriented 3-manifolds; here (for $n\geq 4$) the Whitehead graph is of type (II.11). A special case of \cite[Theorem 6]{RuiniSpaggiari98} gives that, for coprime integers $k,l\geq 1$ and even $n$, the cyclic presentations $\mathcal{G}_n(x_0^lx_1^kx_2^{-l})$ are spines of closed, oriented 3-manifolds. These presentations are the \em Fractional Fibonacci presentations \em $\mathcal{F}^{k/l}(n)$ of \cite{KimVesninPreprint,KimVesninFF} (or of \cite{Maclachlan,MaclachlanReid} in the case $l=1$) and if $(k,l)\neq (1,1)$ their reduced Whitehead graph is of type (II.7). Jeong and Wang \cite{Jeong,JeongWang} showed that, for $l\geq 2$ and even $n$ the cyclic presentations $\mathcal{G}_n(x_1(x_2^{-1}x_0)^l)$ are spines of closed, oriented 3-manifolds; here the reduced Whitehead graph is of type (II.14).  

In Section \ref{sec:positivepresentations} we present a family of cyclic presentations that are  spines of closed 3-manifolds where the Whitehead graphs are of type (I.5) and show that the manifolds are cyclic branched covers of a lens space. This construction is essentially well known (see, for example \cite[page 217]{SeifertThrelfall}, \cite[page 4]{Minkus}, or \cite[Section 3]{Kozlovskaya}), but the connection to the planar Whitehead graph classification of \cite{HowieWilliamsPlanar} has not previously been observed. In Section \ref{sec:Gnkl} we present a new family of cyclic presentations that are spines of closed 3-manifolds where the reduced Whitehead graphs are of type (II.7) (i.e. the same type as those of the Fractional Fibonacci presentations) and show that many, but not all, of the manifolds are hyperbolic. Experiments in {\tt Heegaard} \cite{Berge} were used in formulating these results.

\section{Presentation complexes as spines of closed 3-manifolds}\label{sec:3mfdspines}

The \em presentation complex \em (or \em cellular model\em ) $K=K_\mathcal{P}$ of a group presentation $\mathcal{P} = \pres{X}{R}$ is the 2-complex with one 0-cell $O$, a loop at $O$ for each generator $x \in X$ and a 2-cell for each relator (the boundary of that 2-cell spelling the relator). If $N$ is a regular neighbourhood of $O$ then $K \cap \partial N$ is a 1-dimensional cell complex called the \em Whitehead graph \em or \em link graph \em of $\mathcal{P}$. Thus the Whitehead graph of $\mathcal{P}$ is the graph with $2|X|$ vertices $v_x$, $v_x'$ ($x \in  X$) and an edge $(v_x, v_y)$ (resp. $(v_x', v_y')$, $(v_x, v_y')$) for each occurrence of a cyclic subword $xy^{-1}$ (resp. $x^{-1}y$, $(xy)^{\pm 1}$) in a relator $r \in R$. The \em reduced Whitehead graph \em is the graph obtained from the Whitehead graph by replacing all multiedges between two vertices by a single edge. We say that a group presentation $\mathcal{P}$ is the \em spine \em of a closed 3-manifold $M$ if there exists a 3-ball $B^3\subset M$ such that $M-\mathring{B}^3$ collapses onto $K_\mathcal{P}$ (where $\mathring{B}^3$ denotes the interior of $B^3$). Since $K_\mathcal{P}$ is connected, the manifold $M$ is necessarily connected.

Suppose that a group presentation $\mathcal{P}=\pres{X}{R}$ with an equal number of generators and relators is a spine of a closed, oriented 3-manifold. Then (see \cite[Chapter 9]{SeifertThrelfall}, \cite{Neuwirth}, \cite[page 125]{Sieradski}) there is a $3$-complex $C$ whose set of faces consists of precisely one pair of oppositely oriented faces $F_r^+,F_r^-$ for each relator $r\in R$, whose boundaries spell $r$. Let $M_0$ denote the 3-complex obtained from $C$ by identifying the faces $F_r^+,F_r^-$ ($r\in R$). Then $M_0$ is a closed, connected, oriented pseudo-manifold. The \em Seifert-Threlfall condition \em for $M_0$ to be a manifold $M$ is that its Euler characteristic is zero \cite[Theorem I, Section 60]{SeifertThrelfall}. In this case the cell structure on $M=M_0$ has one vertex, one 3-cell, and 2-skeleton homeomorphic to the presentation complex $K_\mathcal{P}$ of $\mathcal{P}$. The Whitehead graph $\Gamma$ of $\mathcal{P}$ is the link of the single vertex of $K_\mathcal{P}$, and so embeds in the link of the single vertex of $M$. The link of the vertex of $M$ is the 2-sphere, and so $\Gamma$ has a planar embedding on this sphere. When $\Gamma$ is connected the 3-complex $C$ is a polyhedron $\pi$ bounding a 3-ball. Given a planar embedding of $\Gamma$ there is a one-to-one correspondence between the faces $F$ of this embedding and the vertices $u_F$ of $\pi$. This correspondence maps a face of degree $d$ to a vertex of degree $d$ of $\pi$. Moreover, if the vertices in the face read cyclically around the face are $w_1,\ldots ,w_d$ where $w_i\in \{v_x,v_x'\ (x\in X)\}$ then the arcs $e_1,\ldots ,e_d$ incident to $u_F$, read cyclically, are directed towards $u_F$ if $w_i=v_x$ and away from $u_F$ if $w_i=v_x'$, and the arc $e_j$, corresponding to vertex $w_j=v_x$ or $v_x'$, is labelled $x$.

\section{Cyclic presentations $\mathcal{H}(r,n)$ as spines of type (I.5)}\label{sec:positivepresentations}

For $n>1,r \geq 1$ let
\[\mathcal{H}(r,n)=\mathcal{G}_n(x_0x_1\ldots x_{r-1})\]
and let $H(r,n)$ be the group it defines. By \cite[Theorems 2 and 3]{Umar}, $H(r,n)$ is finite if and only if $(n,r)=1$, in which case $H(r,n)\cong \Z_r$. The Whitehead graph of $\mathcal{H}(r,n)$ is connected if and only if $r>1$ and $(n,r)=1$, and in this case it is of type (I.5), as shown in Figure \ref{fig:WHGHrn}, where (as also in Figure \ref{fig:WHGGkln}) the edge labels denote their multiplicities, a vertex label $i$ denotes $v_{x_i}$, and a vertex label $\overline{i}$ denotes $v_{x_i}'$. 

As a warm up to our main result, Theorem \ref{thm:Gkl(n,f)spine}, in this section we show that, when the Whitehead graph is connected (that is, if $n, r>1$ and $(n,r)=1$), then $\mathcal{H}(r,n)$ is a spine of a closed, oriented 3-manifold $M$. The positive presentations of type $\mathfrak{Z}$, considered in \cite{McDermott}, also have Whitehead graphs of type (I.5) and, subject to certain hypotheses, they are spines of closed, oriented manifolds $N$ \cite[Theorem 8]{McDermott}. The manifolds $M$ can often be distinguished from the manifolds $N$ by comparing their fundamental groups $\pi_1(M)=H(r,n)\cong \Z_r$ and $\pi_1(N)$. For example, by \cite[Proposition 6]{McDermott}, $\pi_1(N)$ is often infinite and \cite[Table 1]{McDermott} provides computational evidence that many of the finite groups $\pi_1(N)$ are non-cyclic.

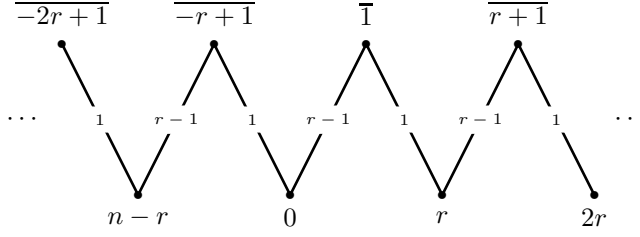
\begin{figure}
\psset{arrowscale=0.7,linewidth=1.0pt,dotsize=3pt}
\begin{center}
\begin{pspicture*}(-2.5,-2.5)(10.5,0.8)
\psline(1.,-2.)(2.,0.)
\psline(3.,-2.)(4.,0.)
\psline(5.,-2.)(6.,0.)

\psline(0.,0.)(1.,-2.)
\psline(2.,0.)(3.,-2.)
\psline(4.,0.)(5.,-2.)
\psline(6.,0.)(7.,-2.)
\begin{scriptsize}
\psdots[dotstyle=*](0.,0.)
\psdots[dotstyle=*](2.,0.)
\psdots[dotstyle=*](4.,0.)
\psdots[dotstyle=*](6.,0.)
\psdots[dotstyle=*](1.,-2.)
\psdots[dotstyle=*](3.,-2.)
\psdots[dotstyle=*](5.,-2.)
\psdots[dotstyle=*](7.,-2.)
\end{scriptsize}
\begin{small}
\rput(1.0,-2.3000000){$n-r$}
\rput(3.0,-2.3000000){$0$}
\rput(5.0,-2.3000000){$r$}
\rput(7.0,-2.3000000){$2r$}
\rput(0,0.4){$\overline{-2r+1}$}
\rput(2,0.4){$\overline{-r+1}$}
\rput(4,0.4){$\overline{1}$}
\rput(6,0.4){$\overline{r+1}$}
\rput(-0.5,-1){$\cdots$}
\rput(7.5,-1){$\cdots$}
\end{small}

\begin{tiny}
\rput*(0.5,-1.0){$1$}
\rput*(1.5,-1.0){$r-1$}
\rput*(2.5,-1.0){$1$}
\rput*(3.5,-1.0){$r-1$}
\rput*(4.5,-1.0){$1$}
\rput*(5.5,-1.0){$r-1$}
\rput*(6.5,-1.0){$1$}
\end{tiny}
\end{pspicture*}
\end{center}
  \caption{Whitehead graph for $\mathcal{H}(r,n)$ (where $(r,n)=1$)\label{fig:WHGHrn} }
\end{figure}

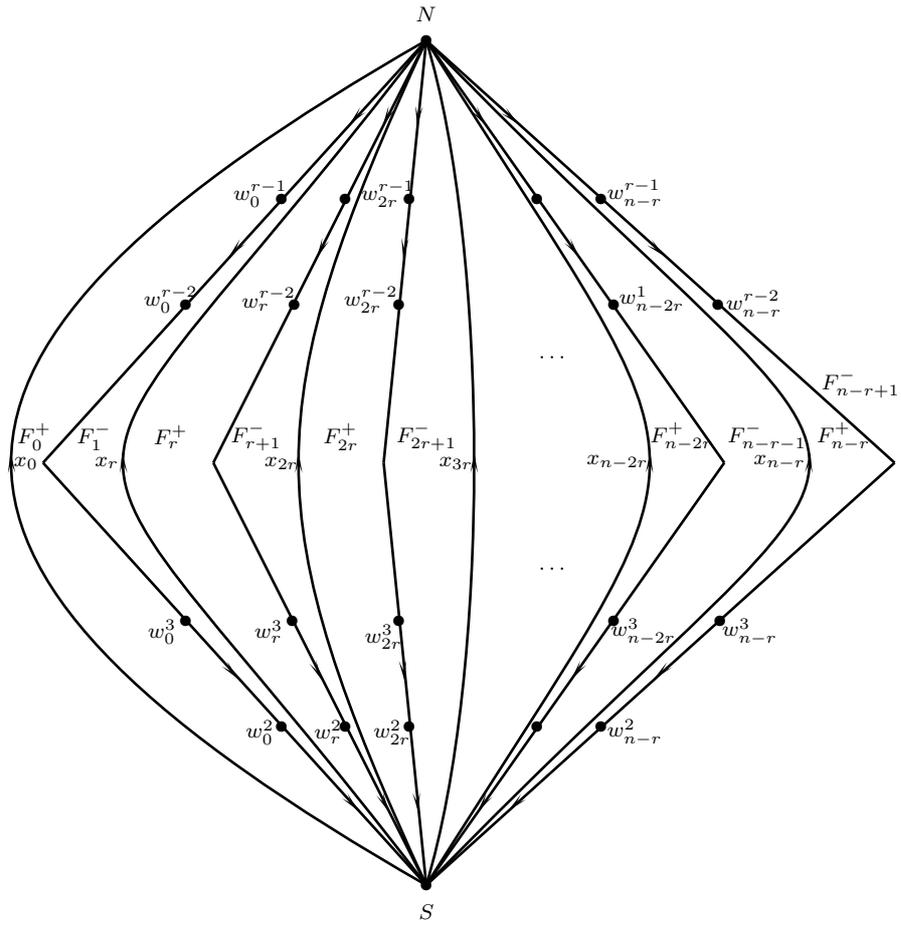
\begin{figure}
\psset{xunit=0.56cm,yunit=0.7cm,algebraic=true,dimen=middle,dotstyle=o,dotsize=5pt 0,linewidth=0.8pt,arrowsize=4pt ,arrowinset=2}
\psset{arrowscale=0.7,linewidth=1.0pt,dotsize=4pt}
    \begin{center}
\begin{pspicture*}(-10,-9)(27,9)
\psline(0.,8.)(-9.,0.)
\psline(0.,8.)(-5.,0.)
\psline(0.,8.)(-1.,0.)
\psline(0.,8.)(7.,0.)
\psline(0.,8.)(11.,0.)
\psline(-9.,0.)(0.,-8.)
\psline(-5.,0.)(0.,-8.)
\psline(-1.,0.)(0.,-8.)
\psline(7.,0.)(0.,-8.)
\psline(11.,0.)(0.,-8.)
\psline[linestyle=none,ArrowInside=->](0.,8.)(-3.4,5.)
\psline[linestyle=none,ArrowInside=->](0.,8.)(-1.9,5.)
\psline[linestyle=none,ArrowInside=->](0.,8.)(-0.4,5.)
\psline[linestyle=none,ArrowInside=->](0.,8.)(2.6,5.)
\psline[linestyle=none,ArrowInside=->](0.,8.)(4.1,5.)
\psline[linestyle=none,ArrowInside=->](-3.4,5.)(-5.65,3.)
\psline[linestyle=none,ArrowInside=->](-1.9,5.)(-3.1,3.)
\psline[linestyle=none,ArrowInside=->](-0.4,5.)(-0.65,3.)
\psline[linestyle=none,ArrowInside=->](2.6,5.)(4.4,3.)
\psline[linestyle=none,ArrowInside=->](4.1,5.)(6.85,3.)
\psline[linestyle=none,ArrowInside=->](-5.65,-3.)(-3.4,-5.)
\psline[linestyle=none,ArrowInside=->](-3.15,-3.)(-1.9,-5.)
\psline[linestyle=none,ArrowInside=->](-0.65,-3.)(-0.4,-5.)
\psline[linestyle=none,ArrowInside=->](4.4,-3.)(2.6,-5.)
\psline[linestyle=none,ArrowInside=->](6.85,-3.)(4.1,-5.)
\psline[linestyle=none,ArrowInside=->](-3.4,-5.)(0,-8.)
\psline[linestyle=none,ArrowInside=->](-1.9,-5.)(0,-8.)
\psline[linestyle=none,ArrowInside=->](-0.4,-5.)(0,-8.)
\psline[linestyle=none,ArrowInside=->](2.6,-5.)(0,-8.)
\psline[linestyle=none,ArrowInside=->](4.1,-5.)(0,-8.)

\psbezier[ArrowInside=->](0,-8)(-13.,-2)(-13.,2)(0,8)
\psbezier[ArrowInside=->](0,-8)(-9.5,2)(-9.5,-2)(0,8)
\psbezier[ArrowInside=->](0,-8)(-4,-1)(-4,1)(0,8)
\psbezier[ArrowInside=->](0,-8)(1.5,-4)(1.5,4)(0,8)
\psbezier[ArrowInside=->](0,-8)(7,1)(7,-1)(0,8)
\psbezier[ArrowInside=->](0,-8)(12,2)(12,-2)(0,8)
\begin{scriptsize}
\rput(-9.2,0.5){$F_{0}^+$}
\rput(-7.8,0.5){$F_{1}^-$}
\rput(-6,0.5){$F_{r}^+$}
\rput(-4,0.5){$F_{r+1}^-$}
\rput(-2,0.5){$F_{2r}^+$}
\rput(+0,0.5){$F_{2r+1}^-$}
\rput(+6,0.5){$F_{n-2r}^+$}
\rput(+8,0.5){$F_{n-r-1}^-$}
\rput(+9.8,0.5){$F_{n-r}^+$}
\rput(+10.2,1.5){$F_{n-r+1}^-$}

\psdots[dotstyle=*](0.,8.) \rput(0,8.5){$N$}
\psdots[dotstyle=*](0.,-8.) \rput(0,-8.5){$S$}
\psdots[dotstyle=*](-3.4,5.) \rput(-3.9,5.1){$w_0^{r-1}$}
\psdots[dotstyle=*](-1.9,5.) 
\psdots[dotstyle=*](-0.4,5.) \rput(-0.9,5.1){$w_{2r}^{r-1}$}
\psdots[dotstyle=*](2.6,5.) 
\psdots[dotstyle=*](4.1,5.) \rput(4.9,5.1){$w_{n-r}^{r-1}$}
\psdots[dotstyle=*](-5.65,3.) \rput(-6.0,3.1){$w_0^{r-2}$}
\psdots[dotstyle=*](-3.1,3.)  \rput(-3.7,3.1){$w_{r}^{r-2}$}
\psdots[dotstyle=*](-0.65,3.)  \rput(-1.3,3.1){$w_{2r}^{r-2}$}
\psdots[dotstyle=*](4.4,3.) \rput(5.3,3.1){$w_{n-2r}^{r-2}$}
\psdots[dotstyle=*](6.85,3.) \rput(7.7,3){$w_{n-r}^{r-2}$}
\psdots[dotstyle=*](-3.4,-5.) \rput(-3.9,-5.1){$w_0^2$}
\psdots[dotstyle=*](-1.9,-5.) \rput(-2.3,-5.1){$w_r^2$}
\psdots[dotstyle=*](-0.4,-5.) \rput(-0.8,-5.1){$w_{2r}^2$}
\psdots[dotstyle=*](2.6,-5.) 
\psdots[dotstyle=*](4.1,-5.) \rput(4.9,-5.1){$w_{n-r}^2$}
%
\psdots[dotstyle=*](-5.65,-3.) \rput(-6.2,-3.2){$w_0^3$}
\psdots[dotstyle=*](-3.15,-3.) \rput(-3.7,-3.2){$w_r^3$}
\psdots[dotstyle=*](-0.65,-3.) \rput(-1.0,-3.3){$w_{2r}^3$}
\psdots[dotstyle=*](4.4,-3.) \rput(5.1,-3.2){$w_{n-2r}^3$}
\psdots[dotstyle=*](6.9,-3.) \rput(7.6,-3.2){$w_{n-r}^3$}
\rput(3,2){$\cdots$}
\rput(3,-2){$\cdots$}
%
\rput(-9.4,0){$x_0$}
\rput(-7.5,0){$x_r$}
\rput(-3.4,0){$x_{2r}$}
\rput(0.7,0){$x_{3r}$}
\rput(4.5,0){$x_{n-2r}$}
\rput(8.3,0){$x_{n-r}$}

\end{scriptsize}
\end{pspicture*}
\end{center}
  \caption{Face pairing polyhedron for $\mathcal{H}(r,n)$\label{fig:P(n)forH(r,n)}}
\end{figure}

\begin{figure}
\psset{arrowscale=0.7,linewidth=1.0pt,dotsize=3pt}
\begin{center}
\begin{pspicture*}(-1.5,-5.0)(14.5,4.5)
\psset{xunit=0.8cm,yunit=1.0cm}
\begin{tiny}
\rput*(1.2,-3.1){$r$}
\rput*(2.05,-3.2){$2$}
\rput*(2.3,-3.4){$1$}
\rput*(4.2,-3.1){$r$}
\rput*(5.05,-3.2){$2$}
\rput*(5.3,-3.4){$1$}
\rput*(7.2,-3.1){$r$}
\rput*(8.05,-3.2){$2$}
\rput*(8.3,-3.4){$1$}
\rput*(0.4,-0.9){$1$}
\rput*(3.4,-0.9){$1$}
\rput*(3.1,-1.){$2$}
\rput*(2.85,-0.9){$3$}
\rput*(6.4,-0.9){$1$}
\rput*(6.1,-1.){$2$}
\rput*(5.85,-0.9){$3$}
\rput*(9.4,-0.9){$1$}
\rput*(9.1,-1.){$2$}
\rput*(8.85,-0.9){$3$}
\rput(2.1,-2){$\cdots$}
\rput(5.1,-2){$\cdots$}
\rput(8.1,-2){$\cdots$}
\end{tiny}

\psline(1.,-4.)(0.,0.)
\psline(1.2,-4.)(2.6,0.)
\psline(1.65,-4.)(3.05,0.)
\psline(1.9,-4.)(3.3,0.)

%
\psline(4.,-4.)(3.0,0.)
\psline(4.2,-4.)(5.6,0.)
\psline(4.65,-4.)(6.05,0.)
\psline(4.9,-4.)(6.3,0.)

\psline(7.,-4.)(6.0,0.)
\psline(7.2,-4.)(8.6,0.)
\psline(7.65,-4.)(9.05,0.)
\psline(7.9,-4.)(9.3,0.)

\psline(10.,-4.)(9.0,0.)
\begin{scriptsize}
\pscircle[fillcolor=white, fillstyle=solid](0.,0.){0.8}
\pscircle[fillcolor=white, fillstyle=solid](3.,0.){0.8}
\pscircle[fillcolor=white, fillstyle=solid](6.,0.){0.8}
\pscircle[fillcolor=white, fillstyle=solid](9.,0.){0.8}
\pscircle[fillcolor=white, fillstyle=solid](1.5,-4.){0.8}
\pscircle[fillcolor=white, fillstyle=solid](4.5,-4.){0.8}
\pscircle[fillcolor=white, fillstyle=solid](7.5,-4.){0.8}
\pscircle[fillcolor=white, fillstyle=solid](10.5,-4.){0.8}
\end{scriptsize}
\begin{small}
\rput(1.5,-4){$F_{n-r}^+$}
\rput(4.5,-4){$F_{0}^+$}
\rput(7.5,-4){$F_{r}^+$}
\rput(10.5,-4){$F_{2r}^+$}
\rput(0,0.0){$F_{-2r+1}^-$}
\rput(3,0.0){$F_{-r+1}^-$}
\rput(6,0.0){$F_{1}^-$}
\rput(9,0.0){$F_{r+1}^-$}
\rput(-0.5,-2){$\cdots$}
\rput(10.5,-2){$\cdots$}
\end{small}
\end{pspicture*}
\end{center}
  \caption{Heegaard diagram for the manifold $M(r,n)$\label{fig:HeegaardforM(r,n)}}
\end{figure}

\begin{figure}
\psset{arrowscale=0.7,linewidth=1.0pt,dotsize=3pt}
\begin{center}
\begin{pspicture*}(-3,-0.9)(3,0.9)
\begin{tiny}
\rput*(-1.2,-0.4){$1$}
\rput*(1.2,-0.4){$r$}
\rput*(-1.2,0.4){$3$}
\rput*(1.2,0.4){$2$}
\rput*(-1.2,0.7){$2$}
\rput*(1.2,0.7){$1$}
\rput*(0,0.1){$\vdots$}
\end{tiny}
\psline(-2.,0.5)(2.,0.5)
\psline(-2.,0.3)(2.,0.3)
\psline(-2.,-0.3)(2.,-0.3)
\psline(-2.,-0.5)(2.,-0.5)
\begin{scriptsize}
\pscircle[fillcolor=white, fillstyle=solid](-2.,0.){0.8}
\pscircle[fillcolor=white, fillstyle=solid](2.,0.){0.8}
\end{scriptsize}
\begin{small}
\rput(2.,0.0){$F^+$}
\rput(-2.,0.0){$F^-$}
\end{small}
\end{pspicture*}
\end{center}
  \caption{Heegaard diagram for the manifold $M(r,n)/\rho$\label{fig:HeegaardQuotientforM(r,n)}}
\end{figure}

Consider the polyhedron in Figure \ref{fig:P(n)forH(r,n)} with face pairing given by identifying the faces $F_i^+,F_i^-$ ($0\leq i<n)$. We shall show that the identification of faces results in a 3-complex $M$ whose 2-skeleton is the presentation complex of $\mathcal{H}(r,n)$, so $M$ has one 0-cell, $n$ 1-cells, $n$ 2-cells, and one 3-cell, so is a manifold by \cite[Theorem I, Section 60]{SeifertThrelfall}.

Let $[S,N]_i$ denote the arc of the face $F_i^+$ with initial vertex $S$ and terminal vertex $N$. All the arcs labelled $x_0$ are contained in the following cycle (of length $r$):
\begin{alignat*}{1}
 [S,N]_0
&\stackrel{F_{n-(r-1)}}{\longrightarrow} [w_{n-(r-1)}^2,w_{n-(r-1)}^1] 
\stackrel{F_{n-(r-2)}}{\longrightarrow} [w_{n-(r-2)}^3,w_{n-(r-2)}^2]\\ 
&
\stackrel{F_{n-(r-3)}}{\longrightarrow} [w_{n-(r-3)}^4,w_{n-(r-3)}^3] 
\stackrel{F_{n-(r-4)}}{\longrightarrow} \cdots 
\stackrel{F_{n-2}}{\longrightarrow} [w_{n-2}^{r-1},w_{n-2}^{r-2}]\\ 
&
\stackrel{F_{n-1}}{\longrightarrow} [N,w_{n-1}^{r-1}]
\stackrel{F_{0}}{\longrightarrow} [S,N]_0. 
\end{alignat*}

All the vertices that are a vertex (either initial or terminal) of an arc labelled $x_0$ are contained in the (induced) cycle of the initial vertices in the arcs above, so in the resulting complex $M$ these vertices are identified. In particular, vertices $N,S$ are identified, and (by comparing initial vertices) for $1< j < r$ vertices $w_{n-j}^{r-j+1}$ are identified with $S$. Therefore, for each $0\leq i<n$, $1< j<r$ the vertex $w_i^j=\theta^{i+r-j+1}(w_{n-(r-j+1)}^j)$ is identified with $S$. Therefore all the vertices of the polyhedron are identified. Thus the quotient $M$ has one 3-cell, $n$ 2-cells, $n$ 1-cells, and one $0$-cell, and since the boundaries of the $2$-cells spell the relators of $\mathcal{H}(r,n)$, it follows that $M$ is a closed, oriented 3-manifold, and $\mathcal{H}(r,n)$ is a spine of $M$.

As in (for example) \cite[proof of Proposition 2]{KozlovskayaVesnin}, a Heegaard diagram for $M$ arising from the face pairing polyhedron is given in Figure \ref{fig:HeegaardforM(r,n)}. This diagram has a rotational symmetry $\rho$ of order $n$ cyclically permuting the faces $F_i^+\rightarrow F_{i+r}^+$ and $F_i^-\rightarrow F_{i+r}^-$. The quotient of $M$ by $\rho$ is a 3-orbifold in which the image of the rotation axis is a singular set. This yields the Heegaard diagram for $M/\rho$ in Figure \ref{fig:HeegaardQuotientforM(r,n)}, which is the canonical diagram of the lens space $L(r,1)$. Hence the manifold $M$ is an $n$-fold  cyclic branched cover of $L(r,1)$.

\section{Cyclic presentations $\mathcal{G}^{k/l}(n,f)$ as spines of type (II.7)}\label{sec:Gnkl}

For $n\geq 2$, $k,l\geq 1$, $0\leq f<n$, let $\mathcal{G}^{k/l}(n,f)$ be the cyclic presentation
\[\mathcal{G}_n((y_0y_f\ldots y_{(l-1)f}) (y_{lf+1}y_{(l+1)f+1}\ldots y_{(l+(k-1))f+1}) (y_2y_{2+f}\ldots y_{2+(l-1)f})^{-1})\]
and let $G^{k/l}(n,f)$ be the group it defines. When $n\geq 4$, the Whitehead graph of $\mathcal{G}^{k/l}(n,f)$ is planar if and only if $n$ is even and either $fk\equiv 0 \bmod n$ or $fk\equiv 2 \bmod n$, in which case it is of type (II.7) if $(k,l)\neq (1,1)$ and is of type (II.11) if $k=l=1$; see Figure \ref{fig:WHGGkln}. Our main result is the following:

\begin{figure}
\psset{arrowscale=0.7,linewidth=1.0pt,dotsize=3pt}
\psset{yunit=0.8cm}
\begin{center}
    \begin{pspicture*}(-2,-2.5)(11,2.5)
\psline(-0.5,-2.)(1.5,-2.)
\psline(1.5,-2.)(3.5,-2.)
\psline(3.5,-2.)(5.5,-2.)
\psline(5.5,-2.)(7.5,-2.)
\psline(0.5,2.)(1.,0.)
\psline(2.5,2.)(3.,0.)
\psline(4.5,2.)(5.,0.)
\psline(6.5,2.)(7.,0.)
\psline(0.5,2.)(0.,0.)
\psline(2.5,2.)(2.,0.)
\psline(4.5,2.)(4.,0.)
\psline(6.5,2.)(6.,0.)
\psline(1.,0.)(1.5,-2.)
\psline(3.,0.)(3.5,-2.)
\psline(5.,0.)(5.5,-2.)
\psline(7.,0.)(7.5,-2.)

\psline(0.,0.)(-0.5,-2.)
\psline(2.,0.)(1.5,-2.)
\psline(4.,0.)(3.5,-2.)
\psline(6.,0.)(5.5,-2.)
\psline(8.,0.)(7.5,-2.)
%
%
\psline(0.,2.)(2.,2.)
\psline(2.,2.)(4.,2.)
\psline(4.,2.)(6.,2.)
\psline(6.,2.)(8.,2.)
\psline(0.,0.)(2.,0.)
\psline(2.,0.)(4.,0.)
\psline(4.,0.)(6.,0.)
\psline(6.,0.)(8.,0.)
\begin{scriptsize}
\psdots[dotstyle=*](0.5,2.)
\psdots[dotstyle=*](2.5,2.)
\psdots[dotstyle=*](4.5,2.)
\psdots[dotstyle=*](6.5,2.)
\psdots[dotstyle=*](0.,0.)
\psdots[dotstyle=*](1.,0.)
\psdots[dotstyle=*](2.,0.)
\psdots[dotstyle=*](3.,0.)
\psdots[dotstyle=*](4.,0.)
\psdots[dotstyle=*](5.,0.)
\psdots[dotstyle=*](6.,0.)
\psdots[dotstyle=*](7.,0.)
\psdots[dotstyle=*](8.,0.)
\psdots[dotstyle=*](-0.5,-2.)
\psdots[dotstyle=*](1.5,-2.)
\psdots[dotstyle=*](3.5,-2.)
\psdots[dotstyle=*](5.5,-2.)
\psdots[dotstyle=*](7.5,-2.)
\end{scriptsize}

\begin{tiny}
\rput*(-0.25,-1.0){$\lambda$}
\rput*(1.75,-1.0){$\lambda$}
\rput*(3.75,-1.0){$\lambda$}
\rput*(5.75,-1.0){$\lambda$}
\rput*(7.75,-1.0){$\lambda$}
\rput*(0.75,1.0){$\lambda$}
\rput*(2.75,1.0){$\lambda$}
\rput*(4.75,1.0){$\lambda$}
\rput*(6.75,1.0){$\lambda$}
\rput*(1.25,-1.0){$1$}
\rput*(3.25,-1.0){$1$}
\rput*(5.25,-1.0){$1$}
\rput*(7.25,-1.0){$1$}
\rput*(0.25,1.0){$1$}
\rput*(2.25,1.0){$1$}
\rput*(4.25,1.0){$1$}
\rput*(6.25,1.0){$1$}
%
\rput*(0.5,-2){$1$}
\rput*(2.5,-2){$1$}
\rput*(4.5,-2){$1$}
\rput*(6.5,-2){$1$}
\rput*(1.5,2){$1$}
\rput*(3.5,2){$1$}
\rput*(5.5,2){$1$}
\rput*(7.5,2){$1$}
\rput*(0.5,0){$1$}
\rput*(1.5,0){$1$}
\rput*(2.5,0){$1$}
\rput*(3.5,0){$1$}
\rput*(4.5,0){$1$}
\rput*(5.5,0){$1$}
\rput*(6.5,0){$1$}
\rput*(7.5,0){$1$}
\end{tiny}

\begin{footnotesize}
%
\rput(-0.5,-2.3000000){$\overline{f-4}$}
\rput(1.5,-2.3000000){$\overline{f-2}$}
\rput(3.5,-2.3000000){$\overline{f}$}
\rput(5.5,-2.3000000){$\overline{f+2}$}
\rput(7.5,-2.3000000){$\overline{f+4}$}
\rput(-0.3,0.2){$-4$}
\rput(0.7,-0.2){$-3$}
\rput(1.7,0.2){$-2$}
\rput(2.7,-0.2){$-1$}
\rput(3.8,0.2){$0$}
\rput(4.9,-0.2){$1$}
\rput(5.9,0.2){$2$}
\rput(6.8,-0.2){$3$}
\rput(7.8,0.2){$4$}
\rput(0.5,2.3000000){$\overline{f-3}$}
\rput(2.5,2.3000000){$\overline{f-1}$}
\rput(4.5,2.3000000){$\overline{f+1}$}
\rput(6.5,2.3000000){$\overline{f+3}$}
\rput(-1,-1){$\cdots$}
\rput(-1,1){$\cdots$}
\rput(9,-1){$\cdots$}
\rput(9,1){$\cdots$}
\end{footnotesize}
\end{pspicture*}
\end{center}
  \caption{Whitehead graph for $\mathcal{G}^{k/l}(n,f)$ (where $\lambda = 2l+k-3$, $n$ even and $fk\equiv 0$ or $2\bmod n$)\label{fig:WHGGkln}}
\end{figure}
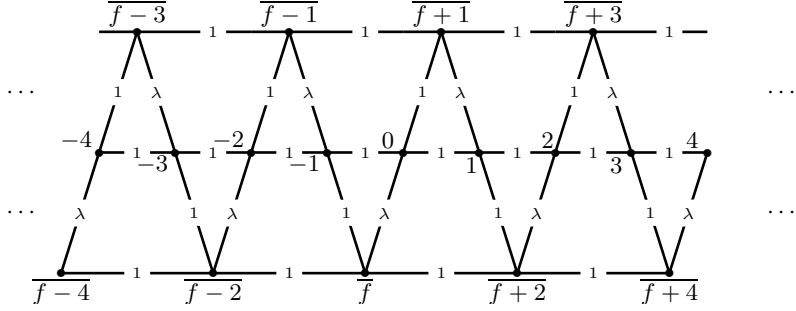

\begin{maintheorem}\label{thm:Gkl(n,f)spine}
Let $n\geq 4$, $fk\not \equiv 2 \bmod n$, and $(k,l)\in \{(k,1),(1,l),(5,2),(2,5)\}$. Then $\mathcal{G}^{k/l}(n,f)$ is a spine of a closed, oriented 3-manifold $M^{k/l}(n,f)$ if and only if $n$ and $f$ are even and $fk\equiv 0 \bmod n$.
\end{maintheorem}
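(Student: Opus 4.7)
The plan is to mirror the warm-up of Section~\ref{sec:positivepresentations}: realise $\mathcal{G}^{k/l}(n,f)$ as the 2-skeleton of a face-paired polyhedron read off from a planar embedding of its Whitehead graph, and apply the Seifert-Threlfall criterion that the resulting pseudo-manifold is a $3$-manifold iff its Euler characteristic vanishes. Because the quotient automatically has one $3$-cell, $n$ faces (one per relator, with paired faces $F_r^\pm$ identified) and $n$ $1$-cells (one per generator), the condition $\chi=0$ reduces to showing that every vertex of the polyhedron is identified to a single vertex in the quotient.

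For the necessity direction, I would first use planarity of the Whitehead graph as a necessary condition for being a spine of a closed $3$-manifold. Combining the planarity criterion stated just before Theorem~\ref{thm:Gkl(n,f)spine} with the standing hypothesis $fk\not\equiv 2\bmod n$ immediately forces $n$ even and $fk\equiv 0\bmod n$. To upgrade this to $f$ even, I would suppose $f$ is odd and seek an obstruction inside the candidate face-pairing polyhedron: the natural approach is to trace the arc-cycle at a fixed generator $y_i$, as in the warm-up, and show that its induced vertex-cycle fails to exhaust the polyhedron vertices precisely when $f$ is odd, giving $V\ge 2$ and hence $\chi\ge 1>0$.

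For the sufficiency direction, assuming $n,f$ even and $fk\equiv 0\bmod n$, I would fix the planar embedding of the type~(II.7) Whitehead graph of Figure~\ref{fig:WHGGkln} and read off a face-pairing polyhedron using the dictionary in the last paragraph of Section~\ref{sec:3mfdspines}: faces of the embedding give polyhedron vertices of matching degree, the cyclic order of arcs around each vertex matches the cyclic order of Whitehead-graph vertices around the face, and the direction of each arc is determined by whether $v_x$ or $v_x'$ appears. With the face pairings $F_r^\pm$ dictated by the relator and its shifts, the entire task reduces to verifying $V=1$ by tracing arc-cycles through the paired faces, exactly as in the $x_0$-cycle calculation carried out for $\mathcal{H}(r,n)$.

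The main obstacle is the combinatorial bookkeeping in this vertex identification, which is why the theorem lists the four families $(k,l)\in\{(k,1),(1,l),(5,2),(2,5)\}$ separately: the relator has length $2l+k$ and the edge-multiplicities in Figure~\ref{fig:WHGGkln} involve $\lambda=2l+k-3$, so the polyhedron changes shape with $(k,l)$ and the vertex-cycle calculation must be reorganised case by case. I would first handle the infinite families $(k,1)$ and $(1,l)$, where the relator collapses to length $k+2$ or $2l+1$ and vertex orbits can be parametrised uniformly in the remaining parameter, and then dispatch the sporadic pairs $(5,2)$ and $(2,5)$ by explicit calculation. The sporadic cases are likely where a uniform argument in $(k,l)$ first breaks down, and tracing their vertex orbits is where I expect the proof to be most delicate.
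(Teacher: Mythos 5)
Your overall architecture matches the paper's: planarity of the Whitehead graph gives $n$ even and $fk\equiv 0\bmod n$; sufficiency is proved by exhibiting an explicit face-pairing polyhedron for each of the four families (which is exactly why the statement is restricted to $(k,1),(1,l),(5,2),(2,5)$ -- the paper likewise stops short of a uniform argument in coprime $k,l$), tracing the cycle of arcs carrying a fixed label through the paired faces, and invoking the Seifert--Threlfall criterion. Two points in your plan are genuinely off, however.

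First, the count of $1$-cells is not ``automatic''. After identification the quotient has $n$ $2$-cells and one $3$-cell, but a priori the arcs carrying the label $x_i$ could fall into several edge classes, so $E\geq n$ with equality only if each label class is a single cycle. Since $\chi=V-E+n-1$, you must prove \emph{both} that all arcs with a given label are identified ($E=n$) and that all vertices are identified ($V=1$); the paper's arc-cycle computations are phrased precisely to deliver both conclusions at once (``all the arcs labelled $x_0$ are contained in the following cycle'', followed by the induced identification of their endpoints). Your reduction of $\chi=0$ to ``$V=1$'' silently assumes the harder half.

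Second, your proposed obstruction for odd $f$ would not work. You suggest showing that the vertex cycle fails to exhaust the vertices, ``giving $V\geq 2$ and hence $\chi\geq 1$''; but $V\geq 2$ does not imply $\chi\geq 1$ without controlling $E$, and, more fundamentally, when $f$ is odd there is no quotient complex whose Euler characteristic you could compute. The paper's Lemma \ref{lem:notspine} first argues that the planar embedding of the type (II.7) Whitehead graph is unique up to homeomorphism of $S^2$ (by enumerating its faces), so any putative face-pairing polyhedron is forced; it then traces a short path from the source vertex $N$ to a degree-$4$ vertex whose outgoing arcs bound a face that must be labelled $F_{lf+1}^-$. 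Since $l$ and $f$ are both odd, $lf+1$ is even, so this face duplicates one of the faces $F_0^-,F_2^-,\dots,F_{n-2}^-$ already incident to $N$ -- contradicting the requirement that each relator label exactly one pair of \emph{oppositely} oriented faces. The obstruction lives at the level of the face pairing itself, one step before any Euler characteristic computation; your plan as stated would hit this wall without recognising it as the contradiction.
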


Our motivation for studying these presentations (and, in particular, under the condition $fk\equiv 0 \bmod n$ but not under the condition $fk\equiv 2 \bmod n$) stems from the following connection to the Fractional Fibonacci groups. The presentations $\mathcal{G}^{k/l}(n,0)$ are the Fractional Fibonacci group presentations 
\[\mathcal{F}^{k/l}(n)= \mathcal{G}_n(x_0^lx_1^kx_2^{-l})\]
introduced in \cite{Maclachlan,MaclachlanReid,KimVesninPreprint,KimVesninFF}, generalizing the Fibonacci group presentations $\mathcal{F}^{1/1}(n)$. For even $n$ and coprime integers $k,l\geq 1$, the Fractional Fibonacci group $F^{k/l}(n)$ is a 3-manifold group \cite[Theorem 4.1]{Maclachlan}, \cite[Section 4]{MaclachlanReid},  \cite[Section 2]{KimVesninPreprint}, \cite[Section 2]{KimVesninFF}. If $n\geq 2$ is even then $\mathcal{F}^{1/1}(n)$ was shown to be a spine of a closed, oriented, 3-manifold in \cite{CavicchioliSpaggiari},\cite{HKM},\cite{HLM1},\cite{HLM2}. More generally, if $k,l\geq 1$ are coprime and $n$ is even, then setting $s_i=q_i=l$, $p_i=-r_i=k$ in \cite[Theorem 6]{RuiniSpaggiari98} gives that $\mathcal{F}^{k/l}(n)$ is a spine of a closed, oriented, 3-manifold. If $n$ is odd then $\mathcal{F}^{k/l}(n)$ is not a spine, since its Whitehead graph is non-planar. The question as to when, for odd $n$, $F^{k/l}(n)$ is a 3-manifold group is considered in \cite[Theorem 3]{HowieWilliamsMFD} for the case $k=l=1$, and in \cite[Theorem 6.2]{ChinyereWilliamsFF} for the case $l=1$ and the case $n=3$. The abelianisation $F^{k/1}(n)^\mathrm{ab}$ is obtained in \cite[Lemma 1]{MaclachlanReid}, \cite[Corollary 4.3]{NoferiniWilliams}.

The \em shift extension \em $E^{k/l}(n)=F^{k/l}(n) \rtimes_\theta \Z_n$  of $F^{k/l}(n)$ by $\Z_n=\pres{t}{t^n}$ has the presentation 
\begin{alignat}{1}
E^{k/l}(n)=\pres{x,t}{t^n, x^ltx^ktx^{-l}t^{-2}},\label{eq:extension}
\end{alignat}
where the second relator is obtained by rewriting the relators of $F^{k/l}(n)$ in terms of the substitutions $x_i=t^ixt^{-i}$ (see, for example, \cite[Theorem 4]{JWW}). As set out in \cite[Section 2]{BogleyShift}, for $0\leq f<n$ such that $fk\equiv 0 \bmod n$, there is a retraction $\nu^f:E^{k/l}(n) \rightarrow \Z_n=\pres{t}{t^n}$ given by $\nu^f(t)=t$, $\nu^f(x)=t^f$. The kernel, $\mathrm{ker} (\nu^f)$,  has a presentation with generators $y_i=t^ixt^{-(i+f)}$ ($0\leq i<n$) and relators that are rewrites of conjugates of the second relator of $E^{k/l}(n)$ by powers of $t$, and so has the cyclic presentation $\mathcal{G}^{k/l}(n,f)$. In particular (under the condition $fk\equiv 0 \bmod n$), 
\begin{alignat*}{1}
  \mathcal{G}^{k/1}(n,0) &= \mathcal{G}_n(y_0 y_1^{k} y_2^{-1})=\mathcal{F}^{k/1}(n),\\
  \mathcal{G}^{1/l}(n,f) &=\mathcal{G}^{1/l}(n,0)=\mathcal{G}_n(y_0^l y_1 y_2^{-l})=\mathcal{F}^{1/l}(n),\\
  \mathcal{G}^{5/2}(n,f) &= \mathcal{G}_n( (y_0y_f) (y_{2f+1}y_{3f+1}y_{4f+1}y_1y_{f+1}) (y_2y_{2+f})^{-1} ),\\
  \mathcal{G}^{2/5}(n,f) &= \mathcal{G}_n((y_0y_fy_{0}y_{f}y_{0}) (y_{f+1}y_1) ( y_2y_{2+f}y_2y_{2+f}y_2)^{-1}).
\end{alignat*}
The shift extension
\begin{alignat*}{1}
     G^{k/l}(n,f) \rtimes_\theta \Z_n 
     &= \pres{y,t}{t^n, (yt^f)^lt(yt^f)^kt^{-fk+1}(yt^f)^{-l}t^{-2}}\\
     &= \pres{x,t}{t^n, x^ltx^kt^{-fk+1}x^{-l}t^{-2}}
\end{alignat*}
(by setting $x=yt^f$ and eliminating $y$). In the case $fk\equiv 0 \bmod n$, this coincides with $E^{k/l}(n)$, which is independent of the value of $f$. This implies, for example, that the order of $G^{k/l}(n,f)$ is independent of $f$, the shift dynamics of $G^{k/l}(n,f)$ are identical for all values of $f$ \cite[Lemma 2.2]{BogleyShift}, and that $G^{k/l}(n,f)$ is (non-elementary) word hyperbolic if and only if $F^{k/l}(n)$ is (non-elementary) word hyperbolic. In Theorem \ref{thm:commensurablehyperbolic} we similarly show that, for fixed $k,l,n$, if two groups $G^{k/l}(n,f_1),G^{k/l}(n,f_2)$ sharing a shift extension are fundamental groups of closed, connected, orientable 3-manifolds then either both manifolds are hyperbolic, or neither are. 
Observe that (under the hypothesis $fk\equiv 0\bmod n$), as in Figure \ref{fig:WHGGkln}, the Whitehead graph of $\mathcal{G}^{k/l}(n,f)$ has vertices $v_{y_i},v_{y_i}'$ and edges $(v_{y_i},v_{y_{i+1}})$, $(v_{y_i}',v_{y_{i+2}}')$, $(v_{y_i},v_{y_{i+f}}')$ (of multiplicity $2l+k-3$), and so the Whitehead graph of $\mathcal{G}^{k/l}(n,f)$ is obtained from that of $\mathcal{F}^{k/l}(n)=\mathcal{G}^{k/l}(n,0)$ by replacing each edge $(v_{y_i},v_{y_{j}}')$ by the edge $(v_{y_i},v_{y_{j+f}}')$.

\begin{remark}\label{rem:WHGcommensurable}
In the general setting of \cite{BogleyShift}, if $\nu^f:\pres{x,t}{t^n,W(x,t)} \rightarrow \pres{t}{t^n}$ ($n\geq 2$) is a retraction given by $\nu^{f}(t)=t$ and $\nu^{f}(x)=t^{f}$ then $\mathrm{ker}(\nu^f)$ has a cyclic presentation $\mathcal{G}_n(\rho^f(W(x,t)))$ where $\rho^f(W(x,t))$ is as defined in \cite[page 159]{BogleyShift}. Analysis of the length two subwords $x_{u(\iota)}^{\epsilon_\iota}x_{u(\iota+1)}^{\epsilon_{\iota+1}}$ of $\rho^f(W(x,t))$ (where $\epsilon_\iota=\pm 1, \epsilon_{\iota+1}=\pm 1$) provides a description of the edge set of the Whitehead graph of $\mathcal{G}_n(\rho^f(W(x,t)))$. Given two such retractions $\nu^{f_1},\nu^{f_2}$ this description yields that the Whitehead graph of $\mathcal{P}_2=\mathcal{G}_n(\rho^{f_2}(W(x,t)))$ is obtained from that of $\mathcal{P}_1=\mathcal{G}_n(\rho^{f_1}(W(x,t)))$ by replacing each edge $(v_{x_i},v_{x_j}')$ by the edge $(v_{x_i},v_{x_{j+(f_2-f_1)}}')$, leaving all other edges unchanged. In particular, if the Whitehead graph of $\mathcal{P}_1$ is one of the types in the planarity classification of \cite{HowieWilliamsPlanar} then the Whitehead graph of $\mathcal{P}_2$ is of the same type.
\end{remark}

In contrast, for fixed $k,l,n$, two groups $G^{k/l}(n,f_1), G^{k/l}(n,f_2)$ sharing a shift extension will typically be non-isomorphic. We give examples of this in Example \ref{ex:finitegroups} and Lemma \ref{lem:Fkl(n)vsGkl(n,n/2)}. Further, in the case $fk\equiv 0 \bmod n$, we also have that the presentation $\mathcal{G}^{k/l}(n,f)$ has a planar Whitehead graph. Since the group $G^{k/l}(n,f)$ shares its shift extension with that of $F^{k/l}(n)$, these properties combined suggest that the geometric properties of $F^{k/l}(n)$ and $\mathcal{F}^{k/l}(n)$ may be inherited by $G^{k/l}(n,f)$ and $\mathcal{G}^{k/l}(n,f)$. However, while \cite[Theorem 6]{RuiniSpaggiari98} implies that $\mathcal{F}^{k/l}(n)$ is a spine for all coprime $k,l$ and even $n\geq 2$, Lemma \ref{lem:notspine} will show that, additionally, $f$ must be even for $\mathcal{G}^{k/l}(n,f)$ to be a spine of a closed, oriented, 3-manifold. This demonstrates that, generally, given two retractions $\nu^{f_1},\nu^{f_2}:\pres{x,t}{t^n,W(x,t)}\rightarrow \pres{t}{t^n}$ whose kernels have cyclic presentations $\mathcal{P}_1,\mathcal{P}_2$, as in Remark \ref{rem:WHGcommensurable}, it is not the case that $\mathcal{P}_1$ is a spine of a closed, oriented 3-manifold if and only if $\mathcal{P}_2$ is such a spine. In the other planar case, $fk\equiv 2\bmod n$, the groups ${G}^{k/l}(n,f)$ do not share a shift extension with ${F}^{k/l}(n)$, so there is no apriori reason to expect similar geometric properties to hold. 

\begin{example}[Non-cyclic finite groups.]\label{ex:finitegroups}
The groups $G^{3/1}(3,0),G^{3/1}(3,1),$ are solvable of order 3528, and derived lengths 3, and 4, respectively (\cite[Remark 1]{MaclachlanReid},\cite[page 282]{JohnsonRobertson}) and hence are non-isomorphic. The groups $G^{3/2}(3,0),G^{3/2}(3,1),$ are solvable of order 504, and derived lengths 2, and 3 respectively (using \cite{GAP}) and hence are non-isomorphic. 
\end{example}

In Lemma \ref{lem:Fkl(n)vsGkl(n,n/2)} we use the fact that the order of the abelianisation $G^{k/l}(n,f)^\mathrm{ab}$ is given by $|\mathrm{Res}(p_f(t),t^n-1)|$, if this is non-zero, and is infinite otherwise, where
\[ p_f(t)= (1-t^2)(1+t^f+\ldots +t^{(l-1)f}) + t^{lf+1}(1+t^f+\ldots + t^{(k-1)f})\]
is the \em representer polynomial \em of $G^{k/l}(n,f)$, and $\mathrm{Res}(\cdot, \cdot)$ denotes the resultant \cite[page 82]{Johnson}. As we will only be interested in the absolute values of resultants (and not the sign), to avoid repetitive use of modulus signs we will take $\mathrm{Res}(\cdot , \cdot)$ to mean $|\mathrm{Res}(\cdot , \cdot)|$.

\begin{lemma}\label{lem:Fkl(n)vsGkl(n,n/2)}
Let $n,k,l\geq 1$ where $n,k$ are even and $\gcd(k,l)=1$  then $F^{k/l}(n)=G^{k/l}(n,0)\not \cong G^{k/l}(n,n/2)$.
\end{lemma}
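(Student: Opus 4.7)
The plan is to distinguish $F^{k/l}(n)=G^{k/l}(n,0)$ from $G^{k/l}(n,n/2)$ by showing that their abelianisations have different orders, computed via the formula $|G^{k/l}(n,f)^{\mathrm{ab}}|=|\mathrm{Res}(p_f(t),t^n-1)|$ stated above the lemma (or infinite if the resultant vanishes). Since $n$ is even, I factor $t^n-1=(t^{n/2}-1)(t^{n/2}+1)$; the two factors are coprime (no root of unity satisfies both $\omega^{n/2}=1$ and $\omega^{n/2}=-1$), so by multiplicativity of the resultant
\[
|\mathrm{Res}(p_f,t^n-1)|=|\mathrm{Res}(p_f,t^{n/2}-1)|\cdot|\mathrm{Res}(p_f,t^{n/2}+1)|.
\]
A direct computation from the definition gives $p_0(t)=l(1-t^2)+kt$, and reducing $p_{n/2}(t)$ modulo $t^{n/2}-1$ (where $t^{n/2}\equiv 1$, so the two geometric sums collapse to $l$ and $k$ and $t^{lf+1}\equiv t$) yields $p_{n/2}\equiv p_0\pmod{t^{n/2}-1}$. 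Hence the $t^{n/2}-1$ factor contributes equally to both resultants, and the comparison reduces to the $t^{n/2}+1$ factor.

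Modulo $t^{n/2}+1$ one has $t^{n/2}\equiv -1$. Since $k$ is even and $\gcd(k,l)=1$ forces $l$ to be odd, the alternating sum $\sum_{i=0}^{k-1}(-1)^i$ vanishes while $\sum_{i=0}^{l-1}(-1)^i=1$, and $t^{lf+1}\equiv -t$, giving the decisive reduction
\[
p_{n/2}(t)\equiv 1-t^2\pmod{t^{n/2}+1}.
\]
The remainder of the argument splits on $n\bmod 4$.

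If $n\equiv 2\pmod 4$, then $n/2$ is odd, so $t=-1$ is a common root of $1-t^2$ and $t^{n/2}+1$; hence $\mathrm{Res}(p_{n/2},t^n-1)=0$ and $G^{k/l}(n,n/2)^{\mathrm{ab}}$ is infinite. Meanwhile $F^{k/l}(n)^{\mathrm{ab}}$ is finite because the two real roots $\alpha,\beta$ of $p_0(t)=-lt^2+kt+l$, which by Vieta satisfy $\alpha\beta=-1$ and $\alpha+\beta=k/l>0$, have $|\alpha|>1>|\beta|$ and so are not roots of unity. If $n\equiv 0\pmod 4$, direct evaluation gives $|\mathrm{Res}(1-t^2,t^{n/2}+1)|=4$, while
\[
|\mathrm{Res}(p_0,t^{n/2}+1)|=l^{n/2}(\alpha^{n/2}+1)(\beta^{n/2}+1)=2l^{n/2}+S_{n/2},
\]
using $\alpha^{n/2}\beta^{n/2}=(\alpha\beta)^{n/2}=1$ and writing $S_m:=l^m(\alpha^m+\beta^m)$, which satisfies the integer Lucas-type recurrence $S_0=2$, $S_1=k$, $S_{m+1}=kS_m+l^2S_{m-1}$. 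For $k\geq 2$ and $l\geq 1$ this sequence is strictly increasing with $S_2=k^2+2l^2\geq 6$, so $2l^{n/2}+S_{n/2}\geq 8>4$. In either case the abelianisation orders differ and the groups are non-isomorphic.

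The one genuinely nontrivial step is the reduction $p_{n/2}\equiv 1-t^2\pmod{t^{n/2}+1}$; it is here that the parity hypotheses on $k$ and $l$ enter essentially, and it is what makes the comparison tractable. The remaining estimates amount to routine resultant arithmetic, and the only care needed is to treat $n\equiv 2\pmod 4$ separately, since there the abelianisation of $G^{k/l}(n,n/2)$ is infinite rather than just of different finite order from that of $F^{k/l}(n)$.
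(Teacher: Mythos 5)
Your proof is correct and follows the same strategy as the paper: factor $\mathrm{Res}(p_f,t^n-1)$ over $t^{n/2}-1$ and $t^{n/2}+1$, observe that $p_{n/2}\equiv p_0 \bmod (t^{n/2}-1)$ since $l$ is odd, and distinguish the two groups via the $t^{n/2}+1$ factor. Your reduction $p_{n/2}\equiv 1-t^2\pmod{t^{n/2}+1}$ and the resulting value $|\mathrm{Res}(p_{n/2},t^{n/2}+1)|=2\,|1+(-1)^{n/2}|$ are in fact the correct ones (the paper's displayed value carries a spurious factor $l^{n/2}$, which happens not to affect its conclusion), and your case split on $n\bmod 4$ together with the Lucas-type recurrence for $S_m$ supplies the justification of the final inequality, and of the nonvanishing of the common factor $\mathrm{Res}(p_0,t^{n/2}-1)$, that the paper leaves implicit.
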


\begin{proof}
For any $0\leq f<n$
\begin{alignat}{1}
    \mathrm{Res}(p_{f}(t),t^n-1)
= \mathrm{Res}(p_{f}(t),t^{n/2}-1)\cdot \mathrm{Res}(p_f(t),t^{n/2}+1).\label{eq:resultant}
\end{alignat}
The hypotheses imply that $l$ is odd so we have
\begin{alignat*}{1}
\mathrm{Res}(p_{n/2}(t),t^{n/2}-1)
&= \mathrm{Res}(l(1-t^2) + kt,t^{n/2}-1)\\
&= \mathrm{Res}(p_0(t),t^{n/2}-1),
\end{alignat*}
and, setting $\alpha,\bar{\alpha}=(k\pm \sqrt{k^2+4l^2})/(2l)$,
\begin{alignat*}{1}
\mathrm{Res}(p_{0}(t),t^{n/2}+1)
&=\mathrm{Res}\left( l(t-\alpha)(t-\bar{\alpha}), t^{n/2}+1\right)\\
&= l^{n/2} (\alpha^{n/2}+1)(\bar{\alpha}^{n/2}+1)\\
&= l^{n/2}\left((-1)^{n/2}+1+(\alpha^{n/2} + \bar{\alpha}^{n/2})\right).
\end{alignat*}
On the other hand,
\begin{alignat*}{1}
\mathrm{Res}(p_{n/2}(t),t^{n/2}+1)
&=\mathrm{Res}\left( l (1-t^2) + kt(1+t^{n/2})/2, t^{n/2}+1\right)\\
&=l^{n/2}\cdot 2(1+(-1)^{n/2}).
\end{alignat*}
Therefore $\mathrm{Res}(p_0(t),t^{n/2}+1)\neq \mathrm{Res}(p_{n/2}(t),t^{n/2}+1)$ so, by equation (\ref{eq:resultant}), $\mathrm{Res}(p_0(t),t^{n}-1)\neq \mathrm{Res}(p_{n/2}(t),t^{n}-1)$. Hence $|G^{k/l}(n,0)^\mathrm{ab}| \neq |G^{k/l}(n,n/2)^\mathrm{ab}|$, and the result follows.
\end{proof}

\begin{lemma}\label{lem:notspine}
Suppose $n\geq 4$, $k,l\geq 1$, $0\leq f<n$ where $n$ is even, $fk\equiv 0\bmod n$, and $\gcd(k,l)=1$. If $f$ is odd then $\mathcal{G}^{k/l}(n,f)$ is not a spine of a closed, oriented 3-manifold.
\end{lemma}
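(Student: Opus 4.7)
The plan is to derive a parity obstruction from the representer polynomial $p_f(t)$ of $G^{k/l}(n,f)$ and convert it to a spine obstruction via the Seifert--Threlfall vertex condition.

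First I would check the parity consequences of the hypotheses. Since $n$ is even, $f$ is odd, and $fk\equiv 0\bmod n$, the $2$-adic valuation forces $v_2(k)\geq v_2(n)\geq 1$, so $k$ is even; combined with $\gcd(k,l)=1$ this forces $l$ to be odd. Substituting these parities into
\[
p_f(t) = (1-t^2)(1 + t^f + \cdots + t^{(l-1)f}) + t^{lf+1}(1 + t^f + \cdots + t^{(k-1)f})
\]
and evaluating at $t=-1$: the first summand vanishes because of the $(1-t^2)$ factor, and the second summand equals $(-1)^{lf+1}\sum_{i=0}^{k-1}(-1)^{if}$. Since $f$ is odd we have $(-1)^{if}=(-1)^i$, and since $k$ is even the resulting alternating sum of $k$ ones is zero. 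Hence $p_f(-1)=0$, equivalently $(t+1)\mid p_f(t)$.

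Translated to the cellular chain complex of the presentation complex $K=K_\mathcal{P}$, the identity $p_f(-1)=0$ is precisely the statement that $\xi:=\sum_{j=0}^{n-1}(-1)^jF_j\in C_2(K)$ has $\partial\xi=0$ (where $F_j$ is the $2$-cell carrying the $j$-th shifted relator); equivalently, the assignment $y_i\mapsto (-1)^i$ extends to a nonzero homomorphism $G^{k/l}(n,f)\to\Z$. Assume for contradiction that $\mathcal{G}^{k/l}(n,f)$ is a spine of a closed, oriented 3-manifold $M$. The main step, and the one I expect to be the hardest, is to convert $\xi$ and the associated sign assignment into a combinatorial contradiction. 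Concretely, I would $2$-colour the vertices $v_{y_i},v'_{y_i}$ of the Whitehead graph by the parity $i\bmod 2$, and observe that for $f$ odd the edges $(v_{y_i},v_{y_{i+1}})$ and the mixed edges $(v_{y_i},v'_{y_{i+f}})$ (of multiplicity $2l+k-3$) reverse colour, while the edges $(v'_{y_i},v'_{y_{i+2}})$ preserve it. Combined with $\xi$ and the shift symmetry, this colouring data refines to a $\Z/2$-invariant of the polyhedron vertices that is preserved by every face-pairing labelled by a shifted relator, so the face-pairings split the set of $n(2l+k-3)+2$ polyhedron vertices into two non-trivial orbits, forcing $V_0\geq 2$ and contradicting the Seifert--Threlfall vertex condition $V_0=1$ equivalent to $\chi(M_0)=0$.

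To make the argument independent of the choice of planar embedding, I would invoke Whitney's theorem: the reduced Whitehead graph of type~(II.7) is $3$-connected, so its planar embedding is unique up to reflection. This reduces the parity check to the canonical embedding depicted in Figure~\ref{fig:WHGGkln}, where the face-vertex sequences can be enumerated directly from the edge catalogue above and the two-orbit structure verified face by face.
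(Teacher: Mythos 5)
Your preliminary observations are correct and match the paper's: the hypotheses ($n$ even, $f$ odd, $fk\equiv 0\bmod n$, $\gcd(k,l)=1$) do force $k$ even and $l$ odd, and the computation $p_f(-1)=0$ is right. But $p_f(-1)=0$ only says that $G^{k/l}(n,f)^{\mathrm{ab}}$ is infinite, equivalently that $b_1\geq 1$; this is not an obstruction to being the spine of a closed, oriented 3-manifold, so the first half of your argument, while true, does no work. The entire burden then falls on the claim that your $\bmod~2$ colouring of the Whitehead graph vertices ``refines to a $\Z/2$-invariant of the polyhedron vertices that is preserved by every face-pairing'' with two non-trivial orbits, forcing $V_0\geq 2$. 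This is asserted, not proved, and as stated it cannot be carried out: the polyhedron vertices correspond to \emph{faces} of the planar embedding, not to vertices of the Whitehead graph, and most of those faces see both parities on their boundary. Concretely, the $3$-gons $v_{y_i}-v_{y_{i+1}}-v_{y_{i+f+1}}'$ and the $4$-gons $v_{y_i}-v_{y_{i+1}}-v_{y_{i+f+2}}'-v_{y_{i+f}}'$ contain indices of both parities when $f$ is odd, so the colouring does not descend to the faces, and no rule for assigning a colour to the corresponding polyhedron vertices is given, let alone a verification that the face-pairings respect it and that both colours occur. Without that, there is no contradiction.

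The paper's proof takes a different and more concrete route, and its contradiction is at the level of \emph{faces}, not vertices: having fixed the (essentially unique) planar embedding, the two $n/2$-gons give two source vertices $N,S$ of the putative face-pairing polyhedron, with the faces incident to $N$ being $F_0^-,F_2^-,\ldots,F_{n-2}^-$. Tracing the path $y_2-y_{2+f}-\cdots-y_{2+(l-1)f}$ from $N$ to its terminal vertex $v$, one finds $v$ is a degree-$4$ vertex whose two outgoing arcs $y_{lf+1},y_{lf+3}$ must bound the face $F_{lf+1}^-$; since $l$ and $f$ are odd, $lf+1$ is even, so $F_{lf+1}^-$ already occurs among the faces at $N$, and the polyhedron would contain two faces labelled $F_{lf+1}^-$ -- contradicting the requirement of exactly one pair $F_i^+,F_i^-$ per relator. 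If you want to salvage your approach you would need either to reproduce an argument of this explicit tracing kind, or to give a genuine definition of your invariant on the faces of the embedding together with a verification that the arc identifications preserve it; as written, the key step is missing. (A small further point: $V_0\geq 2$ contradicts $\chi(M_0)=0$ only once one also knows the number of edge classes equals $n$, which itself requires an argument; and the appeal to Whitney's theorem needs a check that the reduced graph is $3$-connected, whereas the paper deduces uniqueness of the embedding directly from the list of faces.)
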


\begin{proof}
First note that if $f$ is odd, then the hypotheses imply that $l$ is odd. The Whitehead graph of $\mathcal{G}^{k/l}(n,f)$ has a planar embedding, as shown in Figure \ref{fig:WHGGkln}. It has two $n/2$-gons $v_{y_0}'-v_{y_2}'-\cdots -v_{y_{n-2}}'-v_{y_0}'$ and $v_{y_1}'-v_{y_3}'-\cdots -v_{y_{n-1}}'-v_{y_1}'$, $n$ 3-gons $v_{y_i}-v_{y_{i+1}}-v_{y_{i+f+1}}'-v_{y_i}$, $n$ 4-gons $v_{y_i}-v_{y_{i+1}}-v_{y_{i+f+2}}'-v_{y_{i+f}}'-v_{y_i}$, and $\lambda n$ 2-gons $v_{y_i}-v_{y_{i+f+1}}'-v_{y_i}$, where $\lambda= 2l+k-3$. It follows that the reduced Whitehead graph is unique up to self homeomorphism of $S^2$.

If $\mathcal{G}^{k/l}(n,f)$ is a spine of a closed, oriented, 3-manifold then in a putative corresponding face-pairing polyhedron, there are two degree $n/2$ source vertices, $N,S$, say,  where $N$  has outgoing  arcs in cyclic order $y_0,y_2,y_4,\ldots , y_{n-2}$ and $S$ has outgoing arcs in cyclic order $y_1,y_3,y_5,\ldots , y_{n-1}$.  The 2-cells incident to $N$ are therefore, in cyclic order, $F_0^-,F_2^-,F_4^-, \dots , F_{n-2}^-$, where the boundary of $F_i^-$ reads, anticlockwise, the $i$'th relator of $\mathcal{G}^{k/l}(n,f)$. The arc labelled $y_2$, with initial vertex $N$, is the first of a path $y_2-y_{2+f}-\cdots -y_{2+(l-1)f}$. Let $v$ denote the terminal vertex of the last arc in this path. Then $v$ has two incoming arcs labelled  $y_{(l-1)f+1},y_{(l-1)f+2}$, and an outgoing arc labelled $y_{lf+3}$. Therefore $v$ is a degree 4 vertex, and its remaining arc is outgoing, and labelled $y_{lf+1}$. The outgoing arcs $y_{lf+1},y_{lf+3}$ then bound the $F_{lf+1}^-$ face. Since $l,f$ are odd, $lf+1$ is even, so the face pairing contains two $F_{lf+1}^-$ faces, a contradiction.
\end{proof}

In Figures \ref{fig:Pk1(n)},\ref{fig:P52(n)},\ref{fig:P1l(n)},\ref{fig:P25(n)} we present face-pairing polyhedra with $n$ pairs of faces $F_i^+,F_i^-$ ($0\leq i<n$) of opposite orientation whose boundaries spell the defining relators of the relevant presentation $\mathcal{G}^{k/l}(n,f)$. In the proof of Theorem \ref{thm:Gkl(n,f)spine} we show that the identification of faces $F_i^+,F_i^-$ results in a 3-complex that satisfies the Seifert-Threlfall condition, and so is a manifold whose spine is the presentation complex of $\mathcal{G}^{k/l}(n,f)$. (Note that, in these figures, the condition that $f$ is even is necessary for each relator to appear as the label of a pair of oppositely oriented faces.) The diagrams have different forms depending on whether $l>k$ or $k>l$. The figures should provide the enthusiastic reader with sufficient information to construct face-pairing polyhedra in the general cases. The values $\{k,l\}=\{2,5\}$ were selected so that $k,l,|k-l|$ are distinct, to make it straightforward to infer paths of lengths $k,l$, and $|k-l|$. (For example, in Figure \ref{fig:P52(n)} the path from $v_{2f}$ to $u_2$ has length $3=k-l$, and in Figure \ref{fig:P25(n)} the path from $w_f$ to $v_{f-1}$ has length $3=l-k$.) We expect the Seifert-Threlfall condition to hold in the general case for coprime $k,l$; however, the corresponding analysis to check this would be highly technical. Since our goal is to exhibit new cyclic presentations that arise as spines of closed 3-manifolds, whose Whitehead graphs are of type (II.7), rather than to be exhaustive, we have not sought to do this.

\begin{altproof}
We may assume $n$ is even and $fk\equiv 0$, for otherwise the Whitehead graph of $\mathcal{G}^{k/l}(n,f)$ is not planar \cite{HowieWilliamsPlanar} (noting that $fk\not \equiv 2 \bmod n$, by hypothesis). Moreover, by Lemma \ref{lem:notspine} we may assume $f$ is even.

\noindent \textit{Case 1: $(k,l)=(k,1)$.}
Consider the face pairing polyhedron depicted in Figure \ref{fig:Pk1(n)} (where the arc labels can be deduced from the position of the unique source in the boundary of each face) with face pairing given by identifying the faces $F_i^+,F_i^-$ ($0\leq i<n)$. We shall show that the identification of faces results in a 3-complex $M$ whose $2$-skeleton is the presentation complex of $\mathcal{G}^{k/l}(n,f)$, so $M$ has one $0$-cell, $n$ 1-cells, $n$ 2-cells, and one $3$-cell, so is a manifold by \cite[Theorem I, Section 60]{SeifertThrelfall}.

All the arcs labelled $x_0$ are contained in the following cycle: 
\begin{alignat*}{1}
[N,u_0]
&\stackrel{F_0}{\longrightarrow} [u_{1-f},v_0]
\stackrel{F_{-1}}{\longrightarrow} [w_{f}^{k-2},u_1]
\stackrel{F_{f-1}}{\longrightarrow} [w_{2f}^{k-3},w_{2f}^{k-2}]\\
&\stackrel{F_{2f-1}}{\longrightarrow} [w_{3f}^{k-4},w_{3f}^{k-3}]
\stackrel{F_{3f-1}}{\longrightarrow} \cdots
\stackrel{F_{(k-3)f-1}}{\longrightarrow} [w_{(k-2)f}^{1},w_{(k-2)f}^{2}]\\
&\stackrel{F_{(k-2)f-1}}{\longrightarrow} [v_{(k-1)f-2},w_{(k-1)f}^{1}]
\stackrel{F_{(k-1)f-1}}{\longrightarrow} [v_{(k-1)f-3},w_{0}^1]
\stackrel{F_{-2}}{\longrightarrow} [N,u_0].
\end{alignat*}
Therefore, in the resulting complex $M$ all the arcs labelled $x_0$ are identified. Moreover, all the vertices that are a vertex (either initial or terminal) of an arc labelled $x_0$ are contained in the (induced) cycle of the initial vertices in the arcs above, so these vertices are identified in $M$. By applying the shift $\theta^{2j}$ ($0\leq j<n/2$) to the above, all the arcs labelled $x_{2j}$ are identified and all the vertices of such arcs are identified. That is, the vertices $N,u_{2j+(1-f)},w_{2j+f}^{k-2},w_{2j+2f}^{k-3},\dots , w_{2j+(k-2)f}^1, v_{2j+(k-1)f-2}$ are identified; equivalently, $N$, $u_{2j+1}, w_{2j}^1,\dots w_{2j}^{k-2}, v_{2j}$ ($0\leq j<n/2$) are identified.

All the arcs labelled $x_1$ are contained in the following cycle:
\begin{alignat*}{1}
[S,u_1]
&\stackrel{F_1}{\longrightarrow} [u_{2-f},v_1]
\stackrel{F_{0}}{\longrightarrow} [w_{f+1}^{k-2},u_2]
\stackrel{F_{f}}{\longrightarrow} [w_{2f+1}^{k-3},w_{2f+1}^{k-2}]\\
&\stackrel{F_{2f}}{\longrightarrow} [w_{3f+1}^{k-4},w_{3f+1}^{k-3}]
\stackrel{F_{3f}}{\longrightarrow} \cdots
\stackrel{F_{(k-3)f}}{\longrightarrow} [w_{(k-2)f+1}^{1},w_{(k-2)f+1}^{2}]\\
&\stackrel{F_{(k-2)f}}{\longrightarrow} [v_{(k-1)f-1},w_{(k-1)f+1}^{1}]
\stackrel{F_{(k-1)f}}{\longrightarrow} [v_{(k-1)f-2},w_{1}^1]
\stackrel{F_{-2}}{\longrightarrow} [S,u_1].
\end{alignat*}
Therefore, in $M$ all the arcs labelled $x_1$ are identified. Moreover, all the vertices that are a vertex (either initial or terminal) of an arc labelled $x_1$ are contained in the (induced) cycle of the initial vertices in the arcs above, so these vertices are identified in $M$. By applying the shift $\theta^{2j}$ ($0\leq j<n/2$) to the above, all the arcs labelled $x_{2j+1}$ are identified and all the vertices of such arcs are identified. That is, the vertices $S,u_{2j+(2-f)},w_{2j+f+1}^{k-2},w_{2j+2f+1}^{k-3},\dots , w_{2j+(k-2)f+1}^1, v_{2j+(k-1)f-1}$ are identified; that is $S$, $u_{2j}, w_{2j+1}^1,\dots w_{2j+1}^{k-2}, v_{2j+1}$ ($0\leq j<n/2$) are identified. Moreover, examining the terminal vertices of the first cycle above we see that $u_0$ and $u_1$ are identified. Therefore the vertices induced from the first cycle are identified with the vertices from the second cycle, so all vertices of the polyhedron are identified. Thus $M$
has one 3-cell, $n$ 2-cells, $n$ 1-cells, and one 0-cell, and since the boundaries of the $2$-cells spell the relators of $\mathcal{G}^{1/l}(n,0)$, it follows that $\mathcal{G}^{1/l}(n,0)$ is a spine of a closed 3-manifold, as required.

\begin{figure}
\begin{center}
\psset{xunit=0.55cm,yunit=0.5cm,algebraic=true,dimen=middle,dotstyle=o,dotsize=5pt 0,linewidth=1.6pt,arrowsize=3pt 2,arrowinset=0.25}
\psset{arrowscale=0.7,linewidth=1.0pt,dotsize=3pt}
\begin{pspicture*}(-10.7,-8.8)(16.3,12.75)
\psdots[dotstyle=*](2.,12.)\rput(2,12.5){$N$}
\psdots[dotstyle=*](0.,-8.)\rput(0,-8.5){$S$}
\begin{tiny}
\psdots[dotstyle=*](-8.,4.)\rput(-8.3,3.6){$u_{0}$}
\psdots[dotstyle=*](-6.,4.)\rput(-6.3,3.6){$v_{f}$}
\psdots[dotstyle=*](-4.,4.)\rput(-3.7,3.6){$u_{2}$}
\psdots[dotstyle=*](-2.,4.)\rput(-1.4,3.6){$v_{f+2}$}
\psdots[dotstyle=*](0.,4.)\rput(0.3,3.6){$u_{4}$}
\psdots[dotstyle=*](2.,4.)\rput(2.6,3.6){$v_{f+4}$}
\psdots[dotstyle=*](4.,4.)\rput(+4.4,3.6){$u_{6}$}
\psdots[dotstyle=*](6.,4.)\rput(6.6,3.6){$v_{f+6}$}
\psdots[dotstyle=*](8.,4.)\rput(+8.4,3.6){$u_{8}$}
\psdots[dotstyle=*](-5.5,4.)\rput(-5.6,4.5){$w_{f}^{1}$}
\psdots[dotstyle=*](-4.5,4.)\rput(-4.6,4.5){$w_{f}^{k-2}$}
\psdots[dotstyle=*](-1.5,4.)\rput(-1.6,4.5){$w_{f+2}^{1}$}
\psdots[dotstyle=*](-0.5,4.)\rput(-0.6,4.5){$w_{f+2}^{k-2}$}
\psdots[dotstyle=*](2.5,4.)\rput(2.4,4.5){$w_{f+4}^{1}$}
\psdots[dotstyle=*](3.5,4.)
\psdots[dotstyle=*](6.5,4.)\rput(6.4,4.5){$w_{f+6}^{1}$}
\psdots[dotstyle=*](7.5,4.)
\psdots[dotstyle=*](-10.,0.)\rput(-10,0.3){$u_{-1}$}
\psdots[dotstyle=*](-8.,0.)\rput(-7.4,0.3){$v_{f-1}$}
\psdots[dotstyle=*](-6.,0.)\rput(-5.7,0.3){$u_{1}$}
\psdots[dotstyle=*](-4.,0.)\rput(-3.4,0.3){$v_{f+1}$}
\psdots[dotstyle=*](-2.,0.)\rput(-1.7,0.3){$u_{3}$}
\psdots[dotstyle=*](0.,0.)\rput(0.6,0.3){$v_{f+3}$}
\psdots[dotstyle=*](2.,0.)\rput(2.3,0.3){$u_{5}$}
\psdots[dotstyle=*](4.,0.)\rput(4.6,0.3){$v_{f+5}$}
\psdots[dotstyle=*](6.,0.)\rput(6.3,0.3){$u_{7}$}
\psdots[dotstyle=*](8.,0.)\rput(8.6,0.3){$v_{f+7}$}
\psdots[dotstyle=*](10.,0.)\rput(10,0.3){$u_{9}$}
\psdots[dotstyle=*](-7.5,0.)\rput(-7.6,-0.5){$w_{f}^{1}$}
\psdots[dotstyle=*](-6.5,0.)\rput(-6.6,-0.5){$w_{f}^{k-2}$}
\psdots[dotstyle=*](-3.5,0.)\rput(-3.6,-0.5){$w_{f+2}^{1}$}
\psdots[dotstyle=*](-2.5,0.)\rput(-2.6,-0.5){$w_{f+2}^{k-2}$}
\psdots[dotstyle=*](0.5,0.)\rput(0.4,-0.5){$w_{f+4}^{1}$}
\psdots[dotstyle=*](1.5,0.)
\psdots[dotstyle=*](4.5,0.)\rput(4.4,-0.5){$w_{f+6}^{1}$}
\psdots[dotstyle=*](5.5,0.)
\psdots[dotstyle=*](8.5,0.)\rput(8.4,-0.5){$w_{f+8}^{1}$}
\psdots[dotstyle=*](9.5,0.)
\end{tiny}

\psline[linestyle=dotted](9,2)(10,2)
\psline[linestyle=dotted](-9,2)(-10,2)
\psline[linestyle=dotted](9,8)(10,8)
\psline[linestyle=dotted](-9,8)(-10,8)
\psline[linestyle=dotted](9,-4)(10,-4)
\psline[linestyle=dotted](-9,-4)(-10,-4)

\psline[ArrowInside=->](-8.,4.)(-6.,4.)
\psline[ArrowInside=->](-4.,4.)(-2.,4.)
\psline[ArrowInside=->](0.,4.)(2.,4.)
\psline[ArrowInside=->](4.,4.)(6.,4.)

\psline[ArrowInside=->](-6.,4.)(-5.5,4.)
\psline[linestyle=dotted](-5.3,4)(-4.7,4.)
\psline[ArrowInside=->](-4.5,4.)(-4.,4.)

\psline[ArrowInside=->](-2.,4.)(-1.5,4.)
\psline[linestyle=dotted](-1.3,4)(-0.7,4.)
\psline[ArrowInside=->](-0.5,4.)(0.,4.)

\psline[ArrowInside=->](2.,4.)(2.5,4.)
\psline[linestyle=dotted](2.7,4.)(3.3,4.)
\psline[ArrowInside=->](3.5,4.)(4.,4.)

\psline[ArrowInside=->](6.,4.)(6.5,4.)
\psline[linestyle=dotted](6.7,4.)(7.3,4.)
\psline[ArrowInside=->](7.5,4.)(8.,4.)

\psline[ArrowInside=->](-8.,0.)(-7.5,0.)
\psline[linestyle=dotted](-7.3,0)(-6.7,0.)
\psline[ArrowInside=->](-6.5,0.)(-6.,0.)

\psline[ArrowInside=->](-4.,0.)(-3.5,0.)
\psline[linestyle=dotted](-3.3,0)(-2.7,0.)
\psline[ArrowInside=->](-2.5,0.)(-2.,0.)

\psline[ArrowInside=->](0.,0.)(0.5,0.)
\psline[linestyle=dotted](0.7,0)(1.3,0.)
\psline[ArrowInside=->](1.5,0.)(2.,0.)

\psline[ArrowInside=->](4.,0.)(4.5,0.)
\psline[linestyle=dotted](4.7,0)(5.3,0.)
\psline[ArrowInside=->](5.5,0.)(6.,0.)

\psline[ArrowInside=->](8.,0.)(8.5,0.)
\psline[linestyle=dotted](8.7,0)(9.3,0.)
\psline[ArrowInside=->](9.5,0.)(10.,0.)

\psline[ArrowInside=->](-10.,0.)(-8.,0.)
\psline[ArrowInside=->](-6.,0.)(-4.,0.)
\psline[ArrowInside=->](-2.,0.)(0.,0.)
\psline[ArrowInside=->](2.,0.)(4.,0.)
\psline[ArrowInside=->](6.,0.)(8.,0.)
\psline[ArrowInside=->](-8.,4.)(-8.,0.)
\psline[ArrowInside=->](-6.,0.)(-6.,4.)
\psline[ArrowInside=->](-4.,4.)(-4.,0.)
\psline[ArrowInside=->](-2.,0.)(-2.,4.)
\psline[ArrowInside=->](0.,4.)(0.,0.)
\psline[ArrowInside=->](2.,0.)(2.,4.)
\psline[ArrowInside=->](4.,4.)(4.,0.)
\psline[ArrowInside=->](6.,0.)(6.,4.)
\psline[ArrowInside=->](8.,4.)(8.,0.)

\psline[ArrowInside=->](2.,12.)(-8.,4.)
\psline[ArrowInside=->](2.,12.)(-4,4.)
\psline[ArrowInside=->](2.,12.)(0.,4.)
\psline[ArrowInside=->](2.,12.)(4.,4.)
\psline[ArrowInside=->](2.,12.)(8.,4.)


\psline[ArrowInside=->](0.,-8.)(-10.,0.)
\psline[ArrowInside=->](0.,-8.)(-6.,0.)
\psline[ArrowInside=->](0.,-8.)(-2.,0.)
\psline[ArrowInside=->](0.,-8.)(2.,0.)
\psline[ArrowInside=->](0.,-8.)(6,0.)
\psline[ArrowInside=->](0.,-8.)(10.,0.)

\rput(-4.,5.7){$F_{0}^-$}
\rput(-1,5.7){$F_{2}^-$}
\rput(+2.0,5.7){$F_{4}^-$}
\rput(+5.2,5.7){$F_{6}^-$}
\rput(-7,2){$F_{f-1}^-$}
\rput(-5,2){$F_{f}^+$}
\rput(-3,2){$F_{f+1}^-$}
\rput(-1,2){$F_{f+2}^+$}
\rput(+1,2){$F_{f+3}^-$}
\rput(+3,2){$F_{f+4}^+$}
\rput(+5,2){$F_{f+5}^-$}
\rput(+7,2){$F_{f+6}^+$}
\rput(-6.2,-2.3){$F_{-1}^+$}
\rput(-3.2,-2.3){$F_{1}^+$}
\rput(+0.0,-2.3){$F_{3}^+$}
\rput(+3.2,-2.3){$F_{5}^+$}
\rput(+6.2,-2.3){$F_{7}^+$}
\end{pspicture*}
\end{center}
  \caption{Face pairing polyhedron for $\mathcal{G}^{k/1}(n,f)$\label{fig:Pk1(n)}}
\end{figure}


\noindent \textit{Case 2: $(k,l)=(5,2)$.}
Consider the face pairing polyhedron depicted in Figure \ref{fig:P52(n)} (where the arc labels can be deduced from the position of the unique source in the boundary of each face) with face pairing given by identifying the faces $F_i^+,F_i^-$ ($0\leq i<n)$. All the arcs labelled $x_0$ are contained in the following cycle:
\begin{alignat*}{1}
[N,s_0]
&\stackrel{F_0}{\longrightarrow} [u_{1-2f},t_0]
\stackrel{F_{-1}}{\longrightarrow} [w_{2f-1}^1,w_{2f-1}^2]
\stackrel{F_{2f-1}}{\longrightarrow} [r_{2f-1},v_{4f-1}]\\
&\stackrel{F_{4f-2}}{\longrightarrow} [s_{4f},u_{4f}]
\stackrel{F_{4f}}{\longrightarrow} [t_{4f},v_{4f}]
\stackrel{F_{4f-1}}{\longrightarrow} [w_{f-1}^2,u_{-f+1}]\\
&\stackrel{F_{f-1}}{\longrightarrow} [v_{3f-1},w_{3f-1}^1]
\stackrel{F_{3f-1}}{\longrightarrow} [u_{3f-1},r_{3f-1}]
\stackrel{F_{-2}}{\longrightarrow} [N,s_0],
\end{alignat*}
and all the arcs labelled $x_1$ are contained in the following cycle:
\begin{alignat*}{1}
[S,s_1]
&\stackrel{F_1}{\longrightarrow} [u_{2-2f},t_1]
\stackrel{F_{0}}{\longrightarrow} [w_{2f}^1,w_{2f}^2]
\stackrel{F_{2f}}{\longrightarrow} [r_{2f},v_{4f}]\\
&
\stackrel{F_{4f-1}}{\longrightarrow} [s_{4f+1},u_{4f+1}]
\stackrel{F_{4f+1}}{\longrightarrow} [t_{4f+1},v_{4f+1}]
\stackrel{F_{4f}}{\longrightarrow} [w_{f}^2,u_{-f+2}]\\
&
\stackrel{F_{f}}{\longrightarrow} [v_{3f},w_{3f}^1]
\stackrel{F_{3f}}{\longrightarrow} [u_{3f},r_{3f}]
\stackrel{F_{-1}}{\longrightarrow} [S,s_1].
\end{alignat*}
The proof then proceeds as in Case 1.

\begin{figure}
\begin{center}
\psset{xunit=0.55cm,yunit=0.5cm,algebraic=true,dimen=middle,dotstyle=o,dotsize=5pt 0,linewidth=1.6pt,arrowsize=3pt 2,arrowinset=0.25}
\psset{arrowscale=0.7,linewidth=1.0pt,dotsize=3pt}
\begin{pspicture*}(-10.7,-8.8)(16.3,12.75)
\psdots[dotstyle=*](2.,12.)\rput(2,12.5){$N$}
\psdots[dotstyle=*](0.,-8.)\rput(0,-8.5){$S$}
\begin{tiny}
\psdots[dotstyle=*](-3.,8.)\rput(-3.5,8){$s_{0}$}
\psdots[dotstyle=*](-1.,8.)\rput(-1.4,8){$s_{2}$}
\psdots[dotstyle=*](1.,8.)\rput(1.4,8){$s_{4}$}
\psdots[dotstyle=*](3.,8.)\rput(3.4,8){$s_{6}$}
\psdots[dotstyle=*](5,8.)\rput(5.5,8){$s_{8}$}
\psdots[dotstyle=*](-5.,-4.)\rput(-5.5,-4){$s_{-1}$}
\psdots[dotstyle=*](-3.,-4.)\rput(-3.5,-4){$s_{1}$}
\psdots[dotstyle=*](-1.,-4.)\rput(-1.4,-4){$s_{3}$}
\psdots[dotstyle=*](1.,-4.)\rput(1.4,-4){$s_{5}$}
\psdots[dotstyle=*](3.,-4.)\rput(3.4,-4){$s_{7}$}
\psdots[dotstyle=*](5,-4.)\rput(5.5,-4){$s_{9}$}
\psdots[dotstyle=*](-8.,4.)\rput(-8.3,3.6){$u_{0}$}
\psdots[dotstyle=*](-7.,4.)\rput(-7.0,3.6){$r_{0}$}
\psdots[dotstyle=*](-6.,4.)\rput(-6.3,3.6){$v_{2f}$}
\psdots[dotstyle=*](-4.,4.)\rput(-3.7,3.6){$u_{2}$}
\psdots[dotstyle=*](-3.,4.)\rput(-3.,3.6){$r_{2}$}
\psdots[dotstyle=*](-2.,4.)\rput(-1.4,3.6){$v_{2f+2}$}
\psdots[dotstyle=*](0.,4.)\rput(0.3,3.6){$u_{4}$}
\psdots[dotstyle=*](1.,4.)\rput(1,3.6){$r_{4}$}
\psdots[dotstyle=*](2.,4.)\rput(2.6,3.6){$v_{2f+4}$}
\psdots[dotstyle=*](4.,4.)\rput(+4.4,3.6){$u_{6}$}
\psdots[dotstyle=*](5.,4.)\rput(+5,3.6){$r_{6}$}
\psdots[dotstyle=*](6.,4.)\rput(6.6,3.6){$v_{2f+6}$}
\psdots[dotstyle=*](-5.3,4.)\rput(-5.5,4.5){$w_{2f}^{1}$}
\psdots[dotstyle=*](-4.7,4.)\rput(-4.6,4.5){$w_{2f}^{2}$}
\psdots[dotstyle=*](-1.3,4.)\rput(-1.4,4.5){$w_{2f+2}^{1}$}
\psdots[dotstyle=*](-0.7,4.)
\psdots[dotstyle=*](3.3,4.)\rput(2.6,4.5){$w_{2f+4}^{1}$}
\psdots[dotstyle=*](2.7,4.)
\psdots[dotstyle=*](6.7,4.)\rput(6.6,4.5){$w_{2f+6}^{1}$}
\psdots[dotstyle=*](7.3,4.)

\psdots[dotstyle=*](-10.,0.)\rput(-10,0.3){$u_{-1}$}
\psdots[dotstyle=*](-9.,0.)\rput(-9,0.3){$r_{-1}$}
\psdots[dotstyle=*](-8.,0.)\rput(-7.4,0.3){$v_{2f-1}$}
\psdots[dotstyle=*](-6.,0.)\rput(-5.7,0.3){$u_{1}$}
\psdots[dotstyle=*](-5.,0.)\rput(-5.,0.3){$r_{1}$}
\psdots[dotstyle=*](-4.,0.)\rput(-3.4,0.3){$v_{2f+1}$}
\psdots[dotstyle=*](-2.,0.)\rput(-1.7,0.3){$u_{3}$}
\psdots[dotstyle=*](-1.,0.)\rput(-1.,0.3){$r_{3}$}
\psdots[dotstyle=*](0.,0.)\rput(0.6,0.3){$v_{2f+3}$}
\psdots[dotstyle=*](2.,0.)\rput(2.3,0.3){$u_{5}$}
\psdots[dotstyle=*](3.,0.)\rput(3.,0.3){$r_{5}$}
\psdots[dotstyle=*](4.,0.)\rput(4.6,0.3){$v_{2f+5}$}
\psdots[dotstyle=*](6.,0.)\rput(6.3,0.3){$u_{7}$}
\psdots[dotstyle=*](7.,0.)\rput(7.,0.3){$r_{7}$}
\psdots[dotstyle=*](8.,0.)\rput(8.6,0.3){$v_{2f+7}$}
\psdots[dotstyle=*](10.,0.)\rput(10,0.3){$u_{9}$}
\psdots[dotstyle=*](-8.,2.)\rput(-7.4,2.3){$t_{2f-1}$}
\psdots[dotstyle=*](-6.,2.)\rput(-5.6,2.3){$t_{2f}$}
\psdots[dotstyle=*](-4.,2.)\rput(-3.4,2.3){$t_{2f+1}$}
\psdots[dotstyle=*](-2.,2.)\rput(-1.4,2.3){$t_{2f+2}$}
\psdots[dotstyle=*](0.,2.)\rput(0.6,2.3){$t_{2f+3}$}
\psdots[dotstyle=*](2.,2.)\rput(2.6,2.3){$t_{2f+4}$}
\psdots[dotstyle=*](4.,2.)\rput(4.6,2.3){$t_{2f+5}$}
\psdots[dotstyle=*](6.,2.)\rput(6.6,2.3){$t_{2f+6}$}
\psdots[dotstyle=*](8.,2.)\rput(8.6,2.3){$t_{2f+7}$}
\psdots[dotstyle=*](-7.3,0.)\rput(-7.6,-0.5){$w_{2f-1}^{1}$}
\psdots[dotstyle=*](-6.7,0.)\rput(-6.4,-0.5){$w_{2f-1}^{2}$}
\psdots[dotstyle=*](-3.3,0.)\rput(-3.2,-0.5){$w_{2f+1}^{1}$}
\psdots[dotstyle=*](-2.7,0.)
\psdots[dotstyle=*](0.7,0.)\rput(0.8,-0.5){$w_{2f+3}^{1}$}
\psdots[dotstyle=*](1.3,0.)
\psdots[dotstyle=*](4.7,0.)\rput(4.7,-0.5){$w_{2f+5}^{1}$}
\psdots[dotstyle=*](5.3,0.)
\psdots[dotstyle=*](8.7,0.)
\psdots[dotstyle=*](9.3,0.)
\end{tiny}

\psline[linestyle=dotted](9,2)(10,2)
\psline[linestyle=dotted](-9,2)(-10,2)
\psline[linestyle=dotted](9,8)(10,8)
\psline[linestyle=dotted](-9,8)(-10,8)
\psline[linestyle=dotted](9,-4)(10,-4)
\psline[linestyle=dotted](-9,-4)(-10,-4)

\psline[ArrowInside=->](-8.,4.)(-7.,4.)
\psline[ArrowInside=->](-7.,4.)(-6.,4.)
\psline[ArrowInside=->](-4.,4.)(-3.,4.)
\psline[ArrowInside=->](-3.,4.)(-2.,4.)
\psline[ArrowInside=->](0.,4.)(1.,4.)
\psline[ArrowInside=->](1.,4.)(2.,4.)
\psline[ArrowInside=->](4.,4.)(5.,4.)
\psline[ArrowInside=->](5.,4.)(6.,4.)

\psline[ArrowInside=->](-6.,4.)(-5.3,4.)
\psline[ArrowInside=->](-5.3,4)(-4.7,4.)
\psline[ArrowInside=->](-4.7,4.)(-4.,4.)

\psline[ArrowInside=->](-2.,4.)(-1.3,4.)
\psline[ArrowInside=->](-1.3,4)(-0.7,4.)
\psline[ArrowInside=->](-0.7,4.)(0.,4.)

\psline[ArrowInside=->](2.,4.)(2.7,4.)
\psline[ArrowInside=->](2.7,4.)(3.3,4.)
\psline[ArrowInside=->](3.3,4.)(4.,4.)

\psline[ArrowInside=->](6.,4.)(6.7,4.)
\psline[ArrowInside=->](6.7,4.)(7.3,4.)
\psline[ArrowInside=->](7.3,4.)(8.,4.)

\psline[ArrowInside=->](-8.,0.)(-7.3,0.)
\psline[ArrowInside=->](-7.3,0)(-6.7,0.)
\psline[ArrowInside=->](-6.7,0.)(-6.,0.)

\psline[ArrowInside=->](-4.,0.)(-3.3,0.)
\psline[ArrowInside=->](-3.3,0)(-2.7,0.)
\psline[ArrowInside=->](-2.7,0.)(-2.,0.)

\psline[ArrowInside=->](0.,0.)(0.7,0.)
\psline[ArrowInside=->](0.7,0)(1.3,0.)
\psline[ArrowInside=->](1.3,0.)(2.,0.)

\psline[ArrowInside=->](4.,0.)(4.7,0.)
\psline[ArrowInside=->](4.7,0)(5.3,0.)
\psline[ArrowInside=->](5.3,0.)(6.,0.)

\psline[ArrowInside=->](8.,0.)(8.7,0.)
\psline[ArrowInside=->](8.7,0)(9.3,0.)
\psline[ArrowInside=->](9.3,0.)(10.,0.)

\psline[ArrowInside=->](-10.,0.)(-9.,0.)
\psline[ArrowInside=->](-9.,0.)(-8.,0.)
\psline[ArrowInside=->](-6.,0.)(-5.,0.)
\psline[ArrowInside=->](-5.,0.)(-4.,0.)
\psline[ArrowInside=->](-2.,0.)(-1.,0.)
\psline[ArrowInside=->](-1.,0.)(0.,0.)
\psline[ArrowInside=->](2.,0.)(3.,0.)
\psline[ArrowInside=->](3.,0.)(4.,0.)
\psline[ArrowInside=->](6.,0.)(7.,0.)
\psline[ArrowInside=->](7.,0.)(8.,0.)
\psline[ArrowInside=->](-8.,4.)(-8.,2.)
\psline[ArrowInside=->](-8.,2.)(-8.,0.)
\psline[ArrowInside=->](-6.,0.)(-6.,2.)
\psline[ArrowInside=->](-6.,2.)(-6.,4.)
\psline[ArrowInside=->](-4.,4.)(-4.,2.)
\psline[ArrowInside=->](-4.,2.)(-4.,0.)
\psline[ArrowInside=->](-2.,0.)(-2.,2.)
\psline[ArrowInside=->](-2.,2.)(-2.,4.)
\psline[ArrowInside=->](0.,4.)(0.,2.)
\psline[ArrowInside=->](0.,2.)(0.,0.)
\psline[ArrowInside=->](2.,0.)(2.,2.)
\psline[ArrowInside=->](2.,2.)(2.,4.)
\psline[ArrowInside=->](4.,4.)(4.,2.)
\psline[ArrowInside=->](4.,2.)(4.,0.)
\psline[ArrowInside=->](6.,0.)(6.,2.)
\psline[ArrowInside=->](6.,2.)(6.,4.)
\psline[ArrowInside=->](8.,4.)(8.,2.)
\psline[ArrowInside=->](8.,2.)(8.,0.)

\psline[ArrowInside=->](2.,12.)(-3.,8.)
\psline[ArrowInside=->](2.,12.)(-1,8.)
\psline[ArrowInside=->](2.,12.)(1.,8.)
\psline[ArrowInside=->](2.,12.)(3.,8.)
\psline[ArrowInside=->](2.,12.)(5.,8.)
\psline[ArrowInside=->](-3.,8.)(-8.,4.)
\psline[ArrowInside=->](-1.,8.)(-4,4.)
\psline[ArrowInside=->](1.,8.)(0.,4.)
\psline[ArrowInside=->](3.,8.)(4.,4.)
\psline[ArrowInside=->](5.,8.)(8.,4.)


\psline[ArrowInside=->](0.,-8.)(-5.,-4.)
\psline[ArrowInside=->](0.,-8.)(-3.,-4.)
\psline[ArrowInside=->](0.,-8.)(-1.,-4.)
\psline[ArrowInside=->](0.,-8.)(1.,-4.)
\psline[ArrowInside=->](0.,-8.)(3,-4.)
\psline[ArrowInside=->](0.,-8.)(5,-4.)
\psline[ArrowInside=->](-5.,-4.)(-10.,0.)
\psline[ArrowInside=->](-3.,-4.)(-6.,0.)
\psline[ArrowInside=->](-1.,-4.)(-2.,0.)
\psline[ArrowInside=->](1.,-4.)(2.,0.)
\psline[ArrowInside=->](3.,-4.)(6.,0.)
\psline[ArrowInside=->](5.,-4.)(10,0.)

\rput(-4.,5.7){$F_{0}^-$}
\rput(-1,5.7){$F_{2}^-$}
\rput(+2.0,5.7){$F_{4}^-$}
\rput(+5.2,5.7){$F_{6}^-$}
\rput(-7,1.5){$F_{2f-1}^-$}
\rput(-5,1.5){$F_{2f}^+$}
\rput(-3,1.5){$F_{2f+1}^-$}
\rput(-1,1.5){$F_{2f+2}^+$}
\rput(+1,1.5){$F_{2f+3}^-$}
\rput(+3,1.5){$F_{2f+4}^+$}
\rput(+5,1.5){$F_{2f+5}^-$}
\rput(+7,1.5){$F_{2f+6}^+$}
\rput(-6.2,-2.3){$F_{-1}^+$}
\rput(-3.2,-2.3){$F_{1}^+$}
\rput(+0.0,-2.3){$F_{3}^+$}
\rput(+3.2,-2.3){$F_{5}^+$}
\rput(+6.2,-2.3){$F_{7}^+$}
\end{pspicture*}
\end{center}
  \caption{Face pairing polyhedron for $\mathcal{G}^{5/2}(n,f)$\label{fig:P52(n)}}
\end{figure}


\noindent \textit{Case 3: $(k,l)=(1,l)$.}
Consider the face pairing polyhedron depicted in Figure \ref{fig:P1l(n)}. All the arcs labelled $x_0$ are contained in the following cycle:
\begin{alignat*}{1}
[N,u_0^{1}]
&\stackrel{F_0}{\longrightarrow} [w_0,t_0^1]
\stackrel{F_{-2}}{\longrightarrow} [u_0^1,u_0^2]
\stackrel{F_{0}}{\longrightarrow} [t_0^1,t_0^2]
\stackrel{F_{-2}}{\longrightarrow} [u_0^2,u_0^3]\\
&\stackrel{F_{0}}{\longrightarrow} [t_0^2,t_0^3]
\stackrel{}{\longrightarrow} \cdots
\stackrel{F_{-2}}{\longrightarrow} [u_0^{l-2},u_0^{l-1}]
\stackrel{F_{0}}{\longrightarrow} [t_0^{l-2},v_{-1}]\\
&\stackrel{F_{-2}}{\longrightarrow} [u_0^{l-1},w_{-1}]
\stackrel{F_{0}}{\longrightarrow} [v_{-1},v_0]
\stackrel{F_{-1}}{\longrightarrow} [w_{-2},w_0]
\stackrel{F_{-2}}{\longrightarrow} [N,u_0^1],
\end{alignat*}
and all the arcs labelled $x_1$ are contained in the following cycle:
\begin{alignat*}{1}
[S,u_1^{1}]
&\stackrel{F_1}{\longrightarrow} [w_1,t_1^1]
\stackrel{F_{-1}}{\longrightarrow} [u_1^1,u_1^2]
\stackrel{F_{1}}{\longrightarrow} [t_1^1,t_1^2]
\stackrel{F_{-1}}{\longrightarrow} [u_1^2,u_1^3]\\
&\stackrel{F_{1}}{\longrightarrow} [t_1^2,t_1^3]
\stackrel{}{\longrightarrow} \cdots
\stackrel{F_{-1}}{\longrightarrow} [u_1^{l-2},u_1^{l-1}]
\stackrel{F_{1}}{\longrightarrow} [t_1^{l-2},v_{0}]\\
&\stackrel{F_{-1}}{\longrightarrow} [u_1^{l-1},w_{0}]
\stackrel{F_{1}}{\longrightarrow} [v_{0},v_1]
\stackrel{F_{0}}{\longrightarrow} [w_{-1},w_1]
\stackrel{F_{-1}}{\longrightarrow} [S,u_1^1].
\end{alignat*}
The proof then proceeds as in Case 1.

\begin{figure}

\begin{center}
\psset{xunit=0.55cm,yunit=0.7cm,algebraic=true,dimen=middle,dotstyle=o,dotsize=5pt 0,linewidth=1.6pt,arrowsize=3pt 2,arrowinset=0.25}
\psset{arrowscale=0.7,linewidth=1.0pt,dotsize=3pt}
\begin{pspicture*}(-11,-13)(11.5,14)
\psdots[dotstyle=*](2.,12.)\rput(2,12.5){$N$}
\psdots[dotstyle=*](0.,-12.)\rput(0,-12.5){$S$}
\begin{scriptsize}
\psdots[dotstyle=*](-0.48,10.)\rput(-1.2,10){$u_{0}^{1}$}
\psdots[dotstyle=*](0.48,10.)
\psdots[dotstyle=*](1.5,10.)
\psdots[dotstyle=*](2.5,10.)
\psdots[dotstyle=*](3.52,10.)\rput(4.1,10){$u_{8}^{1}$}
\psdots[dotstyle=*](-5.6,6.)\rput(-5.9,6.2){$u_{0}^{l-1}$}
\psdots[dotstyle=*](-2.52,6.)\rput(-2.9,6.2){$u_{2}^{l-1}$}
\psdots[dotstyle=*](0.44,6.)\rput(1.1,6.2){$u_{4}^{l-1}$}
\psdots[dotstyle=*](3.56,6.)\rput(4.2,6.2){$u_{6}^{l-1}$}
\psdots[dotstyle=*](6.52,6.)\rput(7.2,6.2){$u_{8}^{l-1}$}
\psdots[dotstyle=*](-8.,4.)\rput(-8.3,4.3){$w_{-1}$}
\psdots[dotstyle=*](-4.,4.)\rput(-4.3,4.3){$w_{1}$}
\psdots[dotstyle=*](0.,4.)\rput(-0.3,4.3){$w_{3}$}
\psdots[dotstyle=*](4.,4.)\rput(+4.3,4.3){$w_{5}$}
\psdots[dotstyle=*](8.,4.)\rput(+8.3,4.3){$w_{7}$}
\psdots[dotstyle=*](-8.,3.)\rput(-7.4,3){$t_{-1}^1$}
\psdots[dotstyle=*](-4.,3.)\rput(-3.5,3){$t_{1}^1$}
\psdots[dotstyle=*](0.,3.)\rput(0.5,3){$t_{3}^1$}
\psdots[dotstyle=*](4.,3.)\rput(4.5,3){$t_{5}^1$}
\psdots[dotstyle=*](8.,3.)\rput(8.5,3){$t_{7}^1$}
\psdots[dotstyle=*](-8.,1.)\rput(-7.4,1){$t_{-1}^{l-2}$}
\psdots[dotstyle=*](-4.,1.)\rput(-3.3,1){$t_{1}^{l-2}$}
\psdots[dotstyle=*](0.,1.)\rput(0.7,1){$t_{3}^{l-2}$}
\psdots[dotstyle=*](4.,1.)\rput(4.7,1){$t_{5}^{l-2}$}
\psdots[dotstyle=*](8.,1.)\rput(8.7,1){$t_{7}^{l-2}$}
\psdots[dotstyle=*](-10.,0.)\rput(-10,0.3){$v_{-3}$}
\psdots[dotstyle=*](-8.,0.)\rput(-8,-0.3){$v_{-2}$}
\psdots[dotstyle=*](-6.,0.)\rput(-6,0.3){$v_{-1}$}
\psdots[dotstyle=*](-4.,0.)\rput(-4,-0.3){$v_{0}$}
\psdots[dotstyle=*](-2.,0.)\rput(-2,0.3){$v_{1}$}
\psdots[dotstyle=*](0.,0.)\rput(-0,-0.3){$v_{2}$}
\psdots[dotstyle=*](2.,0.)\rput(2,0.3){$v_{3}$}
\psdots[dotstyle=*](4.,0.)\rput(4,-0.3){$v_{4}$}
\psdots[dotstyle=*](6.,0.)\rput(6,0.3){$v_{5}$}
\psdots[dotstyle=*](8.,0.)\rput(8,-0.3){$v_{6}$}
\psdots[dotstyle=*](10.,0.)\rput(10,0.3){$v_{7}$}
\psdots[dotstyle=*](-10.,-1.)\rput(-9.3,-1.0){$t_{-2}^{l-2}$}
\psdots[dotstyle=*](-6.,-1.)\rput(-5.3,-1.0){$t_{0}^{l-2}$}
\psdots[dotstyle=*](-2.,-1.)\rput(-1.3,-1.0){$t_{2}^{l-2}$}
\psdots[dotstyle=*](2.,-1.)\rput(2.7,-1.0){$t_{4}^{l-2}$}
\psdots[dotstyle=*](6.,-1.)\rput(6.7,-1.0){$t_{6}^{l-2}$}
\psdots[dotstyle=*](10.,-1.)\rput(9.4,-1.0){$t_{8}^{l-2}$}
\psdots[dotstyle=*](-10.,-3.)\rput(-9.3,-3.0){$t_{-2}^{1}$}
\psdots[dotstyle=*](-6.,-3.)\rput(-5.5,-3.0){$t_{0}^{1}$}
\psdots[dotstyle=*](-2.,-3.)\rput(-1.5,-3.0){$t_{2}^{1}$}
\psdots[dotstyle=*](2.,-3.)\rput(2.5,-3.0){$t_{4}^{1}$}
\psdots[dotstyle=*](6.,-3.)\rput(6.5,-3.0){$t_{6}^{1}$}
\psdots[dotstyle=*](10.,-3.)\rput(9.6,-3.0){$t_{8}^{1}$}
\psdots[dotstyle=*](-10.,-4.)\rput(-10.3,-4.3){$w_{-2}$}
\psdots[dotstyle=*](-6.,-4.)\rput(-6.3,-4.3){$w_{0}$}
\psdots[dotstyle=*](-2.,-4.)\rput(-2.4,-4.3){$w_{2}$}
\psdots[dotstyle=*](2.,-4.)\rput(2.3,-4.3){$w_{4}$}
\psdots[dotstyle=*](6.,-4.)\rput(6.3,-4.3){$w_{6}$}
\psdots[dotstyle=*](10.,-4.)\rput(9.2,-4.2){$w_{8}$}
\psdots[dotstyle=*](-7.56,-6.)\rput(-8.4,-6){$u_{-1}^{l-1}$}
\psdots[dotstyle=*](-4.52,-6.)\rput(-5.2,-6){$u_{1}^{l-1}$}
\psdots[dotstyle=*](-1.56,-6.)\rput(-2.2,-6){$u_{3}^{l-1}$}
\psdots[dotstyle=*](1.56,-6.)\rput(2.3,-6){$u_{5}^{l-1}$}
\psdots[dotstyle=*](4.52,-6.)\rput(5.3,-6){$u_{7}^{l-1}$}
\psdots[dotstyle=*](7.56,-6.)\rput(8.4,-6){$u_{9}^{l-1}$}
\psdots[dotstyle=*](-2.48,-10.)\rput(-3.0,-10){$u_{-1}^{1}$}
\psdots[dotstyle=*](-1.52,-10.)
\psdots[dotstyle=*](-0.5,-10.)
\psdots[dotstyle=*](0.5,-10.)
\psdots[dotstyle=*](1.52,-10.)
\psdots[dotstyle=*](2.48,-10.)\rput(3.1,-10){$u_{9}^{1}$}
\end{scriptsize}
\psline[linestyle=dotted](9,2)(10,2)
\psline[linestyle=dotted](-9,2)(-10,2)
\psline[linestyle=dotted](9,-8)(10,-8)
\psline[linestyle=dotted](-9,-8)(-10,-8)
\psline[linestyle=dotted](9,8)(10,8)
\psline[linestyle=dotted](-9,8)(-10,8)

\psline[ArrowInside=->](-8.,4.)(-4.,4.)
\psline[ArrowInside=->](-4.,4.)(0.,4.)
\psline[ArrowInside=->](0.,4.)(4.,4.)
\psline[ArrowInside=->](4.,4.)(8.,4.)
\psline[ArrowInside=->](-10.,0.)(-8.,0.)
\psline[ArrowInside=->](-8.,0.)(-6.,0.)
\psline[ArrowInside=->](-6.,0.)(-4.,0.)
\psline[ArrowInside=->](-4.,0.)(-2.,0.)
\psline[ArrowInside=->](-2.,0.)(0.,0.)
\psline[ArrowInside=->](0.,0.)(2.,0.)
\psline[ArrowInside=->](2.,0.)(4.,0.)
\psline[ArrowInside=->](4.,0.)(6.,0.)
\psline[ArrowInside=->](6.,0.)(8.,0.)
\psline[ArrowInside=->](8.,0.)(10.,0.)
\psline[ArrowInside=->](-10.,-4.)(-6.,-4.)
\psline[ArrowInside=->](-6.,-4.)(-2.,-4.)
\psline[ArrowInside=->](-2.,-4.)(2.,-4.)
\psline[ArrowInside=->](2.,-4.)(6.,-4.)
\psline[ArrowInside=->](6.,-4.)(10.,-4.)
\psline[ArrowInside=->](-8.,4.)(-8.,3.)
\psline[linestyle=dotted](-8,2.5)(-8,1.5)
\psline[ArrowInside=->](-8.,1.)(-8.,0.)
\psline[ArrowInside=->](-4.,4.)(-4.,3.)
\psline[linestyle=dotted](-4,2.5)(-4,1.5)
\psline[ArrowInside=->](-4.,1.)(-4.,0.)
\psline[ArrowInside=->](0.,1.)(0.,0.)
\psline[ArrowInside=->](0.,4.)(0.,3.)
\psline[linestyle=dotted](0,2.5)(0,1.5)
\psline[ArrowInside=->](0.,1.)(0.,0.)
\psline[ArrowInside=->](4.,4.)(4.,3.)
\psline[linestyle=dotted](4,2.5)(4,1.5)
\psline[ArrowInside=->](4.,1.)(4.,0.)
\psline[ArrowInside=->](8.,4.)(8.,3.)
\psline[linestyle=dotted](8,2.5)(8,1.5)
\psline[ArrowInside=->](8.,1.)(8.,0.)
\psline[ArrowInside=->](10.,-4.)(10.,-3.)
\psline[linestyle=dotted](10,-2.5)(10,-1.5)
\psline[ArrowInside=->](10.,-1.)(10.,0.)
\psline[ArrowInside=->](-10.,-4.)(-10.,-3.)
\psline[linestyle=dotted](-10,-2.5)(-10,-1.5)
\psline[ArrowInside=->](-10.,-1.)(-10.,0.)
\psline[ArrowInside=->](-6.,-4.)(-6.,-3.)
\psline[linestyle=dotted](-6,-2.5)(-6,-1.5)
\psline[ArrowInside=->](-6.,-1.)(-6.,0.)
\psline[ArrowInside=->](-2.,-1.)(-2.,0.)
\psline[linestyle=dotted](-2,-2.5)(-2,-1.5)
\psline[ArrowInside=->](-2.,-4.)(-2.,-3.)
\psline[ArrowInside=->](2.,-4.)(2.,-3.)
\psline[linestyle=dotted](2,-2.5)(2,-1.5)
\psline[ArrowInside=->](2.,-1.)(2.,0.)
\psline[ArrowInside=->](6.,-4.)(6.,-3.)
\psline[linestyle=dotted](6,-2.5)(6,-1.5)
\psline[ArrowInside=->](6.,-1.)(6.,0.)
\psline[ArrowInside=->](2.,12.)(-0.48,10.)
\psline[ArrowInside=->](2.,12.)(0.48,10.)
\psline[ArrowInside=->](2.,12.)(1.5,10.)
\psline[ArrowInside=->](2.,12.)(2.5,10.)
\psline[ArrowInside=->](2.,12.)(3.52,10.)

\psline[linestyle=dotted](-0.48,10.)(-5.56,6.)
\psline[linestyle=dotted](0.48,10.)(-2.52,6.)
\psline[linestyle=dotted](1.5,10.)(0.44,6.)
\psline[linestyle=dotted](2.5,10.)(3.56,6.)
\psline[linestyle=dotted](3.52,10.)(6.52,6.)

\psline[ArrowInside=->](-5.56,6.)(-8.,4.)
\psline[ArrowInside=->](-2.52,6.)(-4.,4.)
\psline[ArrowInside=->](0.44,6.)(0.,4.)
\psline[ArrowInside=->](3.56,6.)(4.,4.)
\psline[ArrowInside=->](6.52,6.)(8.,4.)

\psline[ArrowInside=->](0.,-12.)(-2.48,-10.)
\psline[ArrowInside=->](0.,-12.)(-1.52,-10.)
\psline[ArrowInside=->](0.,-12.)(-0.5,-10.)
\psline[ArrowInside=->](0.,-12.)(0.5,-10.)
\psline[ArrowInside=->](0.,-12.)(1.52,-10.)
\psline[ArrowInside=->](0.,-12.)(2.48,-10.)

\psline[linestyle=dotted](-2.48,-10.)(-7.56,-6.)
\psline[linestyle=dotted](-1.52,-10.)(-4.52,-6.)
\psline[linestyle=dotted](-0.5,-10.)(-1.56,-6.)
\psline[linestyle=dotted](0.5,-10.)(1.56,-6.)
\psline[linestyle=dotted](1.52,-10.)(4.52,-6.)
\psline[linestyle=dotted](2.48,-10.)(7.56,-6.)

\psline[ArrowInside=->](-7.56,-6.)(-10.,-4.)
\psline[ArrowInside=->](-4.52,-6.)(-6.,-4.)
\psline[ArrowInside=->](-1.56,-6.)(-2.,-4.)
\psline[ArrowInside=->](1.56,-6.)(2.,-4.)
\psline[ArrowInside=->](4.52,-6.)(6.,-4.)
\psline[ArrowInside=->](7.56,-6.)(10.,-4.)
%
\rput(-4.,5.7){$F_{0}^-$}
\rput(-1,5.7){$F_{2}^-$}
\rput(+2.0,5.7){$F_{4}^-$}
\rput(+5.2,5.7){$F_{6}^-$}
\rput(-6,2){$F_{-1}^-$}
\rput(-2,2){$F_{1}^-$}
\rput(+2,2){$F_{3}^-$}
\rput(+6,2){$F_{5}^-$}
\rput(-8,-2){$F_{-2}^+$}
\rput(-4,-2){$F_{0}^+$}
\rput(0,-2){$F_{2}^+$}
\rput(+4,-2){$F_{4}^+$}
\rput(+8,-2){$F_{6}^+$}
\rput(-6.2,-6.3){$F_{-1}^+$}
\rput(-3.2,-6.3){$F_{1}^+$}
\rput(+0.0,-6.3){$F_{3}^+$}
\rput(+3.2,-6.3){$F_{5}^+$}
\rput(+6.2,-6.3){$F_{7}^+$}
\end{pspicture*}
\end{center}
  \caption{Face pairing polyhedron for $\mathcal{G}^{1/l}(n,0)=\mathcal{F}^{1/l}(n)$\label{fig:P1l(n)}}
\end{figure}
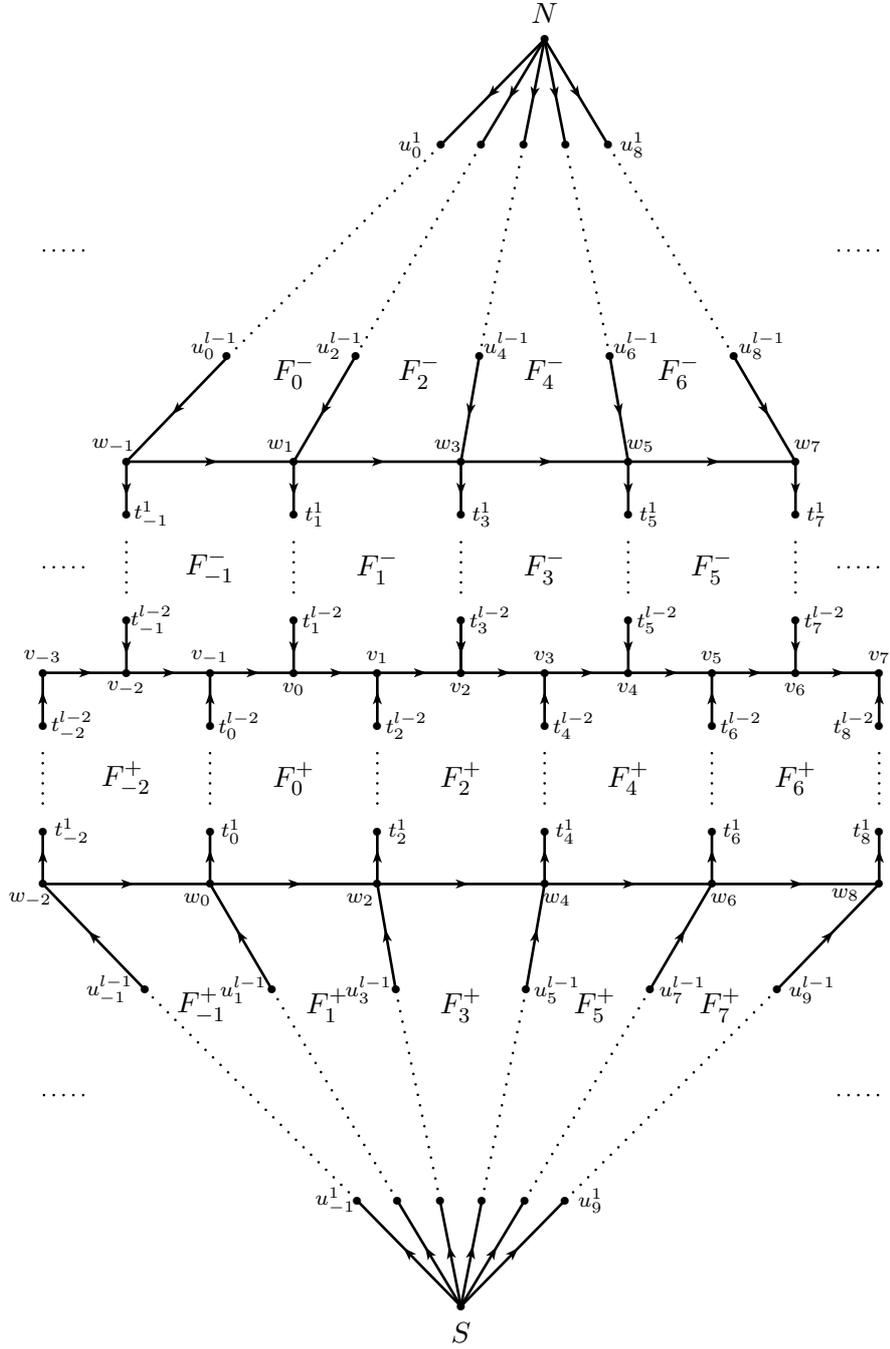


\noindent \textit{Case 4: $(k,l)=(2,5)$.}
Consider the face pairing polyhedron depicted in Figure \ref{fig:P25(n)}. All the arcs labelled $x_0$ are contained in the following cycle:
\begin{alignat*}{1}
[N,u_0^{1}]
&\stackrel{F_0}{\longrightarrow} [w_0,t_0^1]
\stackrel{F_{-2}}{\longrightarrow} [u_{0}^2,u_{0}^3]
\stackrel{F_{0}}{\longrightarrow} [t_0^2,v_{-1}]
\stackrel{F_{-2}}{\longrightarrow} [u_0^4,w_{f-1}]\\
&\stackrel{F_{0}}{\longrightarrow} [s_{-1},v_0]
\stackrel{F_{-1}}{\longrightarrow} [r_{f-2},w_f]
\stackrel{F_{f-2}}{\longrightarrow} [u_f^{1},u_{f}^2]
\stackrel{F_{f}}{\longrightarrow} [t_f^{1},t_{f}^2]\\
&
\stackrel{F_{f-2}}{\longrightarrow} [u_{f}^3,u_f^4]
\stackrel{F_{f}}{\longrightarrow} [v_{f-1},s_{f-1}]
\stackrel{F_{f-1}}{\longrightarrow} [w_{2f-2},r_{2f-2}]
\stackrel{F_{2f-2}}{\longrightarrow} [N,u_{2f}^1],
\end{alignat*}
and all the arcs labelled $x_1$ are contained in the following cycle:
\begin{alignat*}{1}
[S,u_1^{1}]
&\stackrel{F_1}{\longrightarrow} [w_1,t_1^1]
\stackrel{F_{-1}}{\longrightarrow} [u_{1}^2,u_{1}^3]
\stackrel{F_{1}}{\longrightarrow} [t_1^2,v_{0}]
\stackrel{F_{-1}}{\longrightarrow} [u_1^4,w_{f}]\\
&\stackrel{F_{1}}{\longrightarrow} [s_{0},v_1]
\stackrel{F_{0}}{\longrightarrow} [r_{f-1},w_{f+1}]
\stackrel{F_{f-1}}{\longrightarrow} [u_{f+1}^{1},u_{f+1}^2]
\stackrel{F_{f+1}}{\longrightarrow} [t_{f+1}^{1},t_{f+1}^2]\\
&
\stackrel{F_{f-1}}{\longrightarrow} [u_{f+1}^3,u_{f+1}^4]
\stackrel{F_{f+1}}{\longrightarrow} [v_{f},s_{f}]
\stackrel{F_{f}}{\longrightarrow} [w_{2f-1},r_{2f-1}]
\stackrel{F_{2f-1}}{\longrightarrow} [S,u_{2f+1}^1].
\end{alignat*}
The proof then proceeds as in Case 1.
\begin{figure}
    \begin{center}
\psset{xunit=0.55cm,yunit=0.7cm,algebraic=true,dimen=middle,dotstyle=o,dotsize=5pt 0,linewidth=1.6pt,arrowsize=3pt 2,arrowinset=0.25}
\psset{arrowscale=0.7,linewidth=1.0pt,dotsize=3pt}
\begin{pspicture*}(-11,-13)(11.5,14)
\psdots[dotstyle=*](2.,12.)\rput(2,12.5){$N$}
\psdots[dotstyle=*](0.,-12.)\rput(0,-12.5){$S$}
\begin{scriptsize}
\psdots[dotstyle=*](-0.48,10.)\rput(-1.2,10){$u_{0}^{1}$}
\psdots[dotstyle=*](0.48,10.)\rput(-0.,10){$u_{2}^{1}$}
\psdots[dotstyle=*](1.5,10.)\rput(1.0,10){$u_{4}^{1}$}
\psdots[dotstyle=*](2.5,10.)\rput(3.0,10){$u_{6}^{1}$}
\psdots[dotstyle=*](3.52,10.)\rput(4.1,10){$u_{8}^{1}$}
\psdots[dotstyle=*](-2.1,8.7)\rput(-2.8,8.7){$u_{0}^{2}$}
\psdots[dotstyle=*](-0.48,8.7)\rput(-1.1,8.7){$u_{2}^{2}$}
\psdots[dotstyle=*](1.25,8.7)\rput(0.65,8.7){$u_{4}^{2}$}
\psdots[dotstyle=*](2.85,8.7)\rput(2.4,8.7){$u_{6}^{2}$}
\psdots[dotstyle=*](4.5,8.7)\rput(5.0,8.7){$u_{8}^{2}$}
\psline[ArrowInside=->](-0.48,10.)(-2.1,8.7)
\psline[ArrowInside=->](0.48,10.)(-0.48,8.7)
\psline[ArrowInside=->](1.5,10.)(1.25,8.7)
\psline[ArrowInside=->](2.5,10.)(2.85,8.7)
\psline[ArrowInside=->](3.52,10.)(4.5,8.7)
\psline[ArrowInside=->](-2.1,8.7)(-3.8,7.3)
\psline[ArrowInside=->](-0.48,8.7)(-1.5,7.3)
\psline[ArrowInside=->](1.25,8.7)(0.8,7.3)
\psline[ArrowInside=->](2.85,8.7)(3.2,7.3)
\psline[ArrowInside=->](4.5,8.7)(5.5,7.3)
\psline[ArrowInside=->](-3.8,7.3)(-5.6,6.)
\psline[ArrowInside=->](-1.5,7.3)(-2.52,6.)
\psline[ArrowInside=->](0.8,7.3)(0.44,6.)
\psline[ArrowInside=->](3.2,7.3)(3.56,6.)
\psline[ArrowInside=->](5.5,7.3)(6.52,6.)
%
\psdots[dotstyle=*](-3.8,7.3)\rput(-4.5,7.3){$u_{0}^{3}$}
\psdots[dotstyle=*](-1.5,7.3)\rput(-2.1,7.3){$u_{2}^{3}$}
\psdots[dotstyle=*](0.8,7.3)\rput(0.3,7.3){$u_{4}^{3}$}
\psdots[dotstyle=*](3.2,7.3)\rput(3.7,7.3){$u_{6}^{3}$}
\psdots[dotstyle=*](5.5,7.3)\rput(6.0,7.3){$u_{8}^{3}$}
\psdots[dotstyle=*](-5.6,6.)\rput(-6.0,6.2){$u_{0}^{4}$}
\psdots[dotstyle=*](-2.52,6.)\rput(-3.0,6.2){$u_{2}^{4}$}
\psdots[dotstyle=*](0.44,6.)\rput(1.0,6.2){$u_{4}^{4}$}
\psdots[dotstyle=*](3.56,6.)\rput(4.1,6.2){$u_{6}^{4}$}
\psdots[dotstyle=*](6.52,6.)\rput(7.1,6.2){$u_{8}^{4}$}
\psdots[dotstyle=*](-8.,4.)\rput(-8.5,4.3){$w_{f-1}$}
\psdots[dotstyle=*](-6.,4.)\rput(-6.0,3.6){$r_{f-1}$}
\psdots[dotstyle=*](-4.,4.)\rput(-4.5,4.3){$w_{f+1}$}
\psdots[dotstyle=*](-2.,4.)\rput(-2.0,3.6){$r_{f+1}$}
\psdots[dotstyle=*](0.,4.)\rput(-0.8,4.3){$w_{f+3}$}
\psdots[dotstyle=*](2.,4.)\rput(2.0,3.6){$r_{f+3}$}
\psdots[dotstyle=*](4.,4.)\rput(+4.7,4.3){$w_{f+5}$}
\psdots[dotstyle=*](6.,4.)\rput(6.0,3.6){$r_{f+5}$}
\psdots[dotstyle=*](8.,4.)\rput(+8.5,4.3){$w_{f+7}$}
\psdots[dotstyle=*](-8.,2.7)\rput(-7.2,2.7){$t_{f-1}^1$}
\psdots[dotstyle=*](-4.,2.7)\rput(-3.3,2.7){$t_{f+1}^1$}
\psdots[dotstyle=*](0.,2.7)\rput(0.7,2.7){$t_{f+3}^1$}
\psdots[dotstyle=*](4.,2.7)\rput(4.7,2.7){$t_{f+5}^1$}
\psdots[dotstyle=*](8.,2.7)\rput(8.7,2.7){$t_{f+7}^1$}

\psdots[dotstyle=*](-8.,1.3)\rput(-7.2,1.3){$t_{f-1}^{2}$}
\psdots[dotstyle=*](-4.,1.3)\rput(-3.3,1.3){$t_{f+1}^{2}$}
\psdots[dotstyle=*](0.,1.3)\rput(0.7,1.3){$t_{f+3}^{2}$}
\psdots[dotstyle=*](4.,1.3)\rput(4.7,1.3){$t_{f+5}^{2}$}
\psdots[dotstyle=*](8.,1.3)\rput(8.7,1.3){$t_{f+7}^{2}$}
\psdots[dotstyle=*](-10.,0.)\rput(-10,0.3){$v_{f-3}$}
\psdots[dotstyle=*](-9.,0.)\rput(-9,-0.3){$s_{f-3}$}
\psdots[dotstyle=*](-8.,0.)\rput(-7.9,-0.5){$v_{f-2}$}
\psdots[dotstyle=*](-7.,0.)\rput(-7,-0.3){$s_{f-2}$}
\psdots[dotstyle=*](-6.,0.)\rput(-6,0.3){$v_{f-1}$}
\psdots[dotstyle=*](-5.,0.)\rput(-5,-0.3){$s_{f-1}$}
\psdots[dotstyle=*](-4.,0.)\rput(-4,-0.3){$v_{f}$}
\psdots[dotstyle=*](-3.,0.)\rput(-3,-0.3){$s_{f}$}
\psdots[dotstyle=*](-2.,0.)\rput(-2,0.3){$v_{f+1}$}
\psdots[dotstyle=*](-1.,0.)\rput(-1,-0.3){$s_{f+1}$}
\psdots[dotstyle=*](0.,0.)\rput(0.1,-0.5){$v_{f+2}$}
\psdots[dotstyle=*](1.,0.)\rput(1,-0.3){$s_{f+2}$}
\psdots[dotstyle=*](2.,0.)\rput(2,0.3){$v_{f+3}$}
\psdots[dotstyle=*](3.,0.)\rput(3,-0.3){$s_{f+3}$}
\psdots[dotstyle=*](4.,0.)\rput(4.1,-0.5){$v_{f+4}$}
\psdots[dotstyle=*](5.,0.)\rput(5,-0.3){$s_{f+4}$}
\psdots[dotstyle=*](6.,0.)\rput(6,0.3){$v_{f+5}$}
\psdots[dotstyle=*](7.,0.)\rput(7,-0.3){$s_{f+5}$}
\psdots[dotstyle=*](8.,0.)\rput(8.1,-0.5){$v_{f+6}$}
\psdots[dotstyle=*](9.,0.)\rput(9,-0.3){$s_{f+6}$}
\psdots[dotstyle=*](10.,0.)\rput(9.8,0.3){$v_{f+7}$}
\psdots[dotstyle=*](-10.,-1.3)\rput(-9.3,-1.3){$t_{f-2}^{2}$}
\psdots[dotstyle=*](-6.,-1.3)\rput(-5.3,-1.3){$t_{f}^{2}$}
\psdots[dotstyle=*](-2.,-1.3)\rput(-1.3,-1.3){$t_{f+2}^{2}$}
\psdots[dotstyle=*](2.,-1.3)\rput(2.7,-1.3){$t_{f+4}^{2}$}
\psdots[dotstyle=*](6.,-1.3)\rput(6.7,-1.3){$t_{f+6}^{2}$}
\psdots[dotstyle=*](10.,-1.3)\rput(9.3,-1.3){$t_{f+8}^{2}$}
\psdots[dotstyle=*](-10.,-2.7)\rput(-9.3,-2.7){$t_{f-2}^{1}$}
\psdots[dotstyle=*](-6.,-2.7)\rput(-5.5,-2.7){$t_{f}^{1}$}
\psdots[dotstyle=*](-2.,-2.7)\rput(-1.3,-2.7){$t_{f+2}^{1}$}
\psdots[dotstyle=*](2.,-2.7)\rput(2.7,-2.7){$t_{f+4}^{1}$}
\psdots[dotstyle=*](6.,-2.7)\rput(6.7,-2.7){$t_{f+6}^{1}$}
\psdots[dotstyle=*](10.,-2.7)\rput(9.3,-2.7){$t_{f+8}^{1}$}
\psdots[dotstyle=*](-10.,-4.)\rput(-10.2,-4.5){$w_{f-2}$}
\psdots[dotstyle=*](-8.,-4.)\rput(-8.,-4.3){$r_{f-2}$}
\psdots[dotstyle=*](-6.,-4.)\rput(-6.3,-4.3){$w_{f}$}
\psdots[dotstyle=*](-4.,-4.)\rput(-4.,-4.3){$r_{f}$}
\psdots[dotstyle=*](-2.,-4.)\rput(-2.6,-4.3){$w_{f+2}$}
\psdots[dotstyle=*](0.,-4.)\rput(0.,-4.3){$r_{f+2}$}
\psdots[dotstyle=*](2.,-4.)\rput(2.6,-4.3){$w_{f+4}$}
\psdots[dotstyle=*](4.,-4.)\rput(4.,-4.3){$r_{f+4}$}
\psdots[dotstyle=*](6.,-4.)\rput(6.6,-4.3){$w_{f+6}$}
\psdots[dotstyle=*](8.,-4.)\rput(8.,-4.3){$r_{f+6}$}
\psdots[dotstyle=*](10.,-4.)\rput(9.3,-3.7){$w_{f+8}$}

\psdots[dotstyle=*](-7.6,-6.)\rput(-8.0,-6.2){$u_{-1}^{4}$}
\psdots[dotstyle=*](-4.52,-6.)\rput(-5.0,-6.2){$u_{1}^{4}$}
\psdots[dotstyle=*](-1.56,-6.)\rput(-1.0,-6.2){$u_{3}^{4}$}
\psdots[dotstyle=*](1.56,-6.)\rput(2.1,-6.2){$u_{5}^{4}$}
\psdots[dotstyle=*](4.52,-6.)\rput(5.1,-6.2){$u_{7}^{4}$}
\psdots[dotstyle=*](7.6,-6.)\rput(8.0,-6.2){$u_{9}^{4}$}

\psline[ArrowInside=->](-7.56,-6.)(-10.,-4.)
\psline[ArrowInside=->](-4.52,-6.)(-6.,-4.)
\psline[ArrowInside=->](-1.56,-6.)(-2.,-4.)
\psline[ArrowInside=->](1.56,-6.)(2.,-4.)
\psline[ArrowInside=->](4.52,-6.)(6.,-4.)
\psline[ArrowInside=->](7.6,-6.)(10.,-4.)

\psdots[dotstyle=*](-5.8,-7.3)\rput(-6.5,-7.3){$u_{-1}^{3}$}
\psdots[dotstyle=*](-3.5,-7.3)\rput(-4.1,-7.3){$u_{1}^{3}$}
\psdots[dotstyle=*](-1.2,-7.3)\rput(-1.7,-7.3){$u_{3}^{3}$}
\psdots[dotstyle=*](1.2,-7.3)\rput(1.7,-7.3){$u_{5}^{3}$}
\psdots[dotstyle=*](3.5,-7.3)\rput(4.0,-7.3){$u_{7}^{3}$}
\psdots[dotstyle=*](5.8,-7.3)\rput(6.5,-7.3){$u_{9}^{3}$}

\psline[ArrowInside=->](0.,-12.)(-2.48,-10.)
\psline[ArrowInside=->](0.,-12.)(-1.52,-10.)
\psline[ArrowInside=->](0.,-12.)(-0.5,-10.)
\psline[ArrowInside=->](0.,-12.)(0.5,-10.)
\psline[ArrowInside=->](0.,-12.)(1.52,-10.)
\psline[ArrowInside=->](0.,-12.)(2.48,-10.)

\psdots[dotstyle=*](-4.1,-8.7)\rput(-4.8,-8.7){$u_{-1}^{2}$}
\psdots[dotstyle=*](-2.48,-8.7)\rput(-3.1,-8.7){$u_{1}^{2}$}
\psdots[dotstyle=*](-.75,-8.7)\rput(-1.35,-8.7){$u_{3}^{2}$}
\psdots[dotstyle=*](0.85,-8.7)\rput(0.4,-8.7){$u_{5}^{2}$}
\psdots[dotstyle=*](2.5,-8.7)\rput(3.0,-8.7){$u_{7}^{2}$}
\psdots[dotstyle=*](4.1,-8.7)\rput(4.8,-8.7){$u_{9}^{2}$}
\psline[ArrowInside=->](-2.48,-10.)(-4.1,-8.7)
\psline[ArrowInside=->](-1.52,-10.)(-2.48,-8.7)
\psline[ArrowInside=->](-0.5,-10.)(-0.75,-8.7)
\psline[ArrowInside=->](0.5,-10.)(0.85,-8.7)
\psline[ArrowInside=->](1.52,-10.)(2.5,-8.7)
\psline[ArrowInside=->](2.48,-10.)(4.1,-8.7)
\psline[ArrowInside=->](-4.1,-8.7)(-5.8,-7.3)
\psline[ArrowInside=->](-2.48,-8.7)(-3.5,-7.3)
\psline[ArrowInside=->](-0.75,-8.7)(-1.2,-7.3)
\psline[ArrowInside=->](0.85,-8.7)(1.2,-7.3)
\psline[ArrowInside=->](2.5,-8.7)(3.5,-7.3)
\psline[ArrowInside=->](4.1,-8.7)(5.8,-7.3)
\psline[ArrowInside=->](-5.8,-7.3)(-7.6,-6.)
\psline[ArrowInside=->](-3.5,-7.3)(-4.52,-6.)
\psline[ArrowInside=->](-1.2,-7.3)(-1.56,-6.)
\psline[ArrowInside=->](1.2,-7.3)(1.56,-6.)
\psline[ArrowInside=->](3.5,-7.3)(4.52,-6.)
\psline[ArrowInside=->](5.8,-7.3)(7.6,-6.)
%

\psdots[dotstyle=*](-2.48,-10.)\rput(-3.0,-10){$u_{-1}^{1}$}
\psdots[dotstyle=*](-1.52,-10.)
\psdots[dotstyle=*](-0.5,-10.)
\psdots[dotstyle=*](0.5,-10.)
\psdots[dotstyle=*](1.52,-10.)
\psdots[dotstyle=*](2.48,-10.)\rput(3.1,-10){$u_{9}^{1}$}

\psline[linestyle=dotted](9,2)(10,2)
\psline[linestyle=dotted](-9,2)(-10,2)
\psline[linestyle=dotted](9,-8)(10,-8)
\psline[linestyle=dotted](-9,-8)(-10,-8)
\psline[linestyle=dotted](9,8)(10,8)
\psline[linestyle=dotted](-9,8)(-10,8)

\psline[ArrowInside=->](-8.,4.)(-6.,4.)
\psline[ArrowInside=->](-6.,4.)(-4.,4.)
\psline[ArrowInside=->](-4.,4.)(-2.,4.)
\psline[ArrowInside=->](-2.,4.)(0.,4.)
\psline[ArrowInside=->](0.,4.)(2.,4.)
\psline[ArrowInside=->](2.,4.)(4.,4.)
\psline[ArrowInside=->](4.,4.)(6.,4.)
\psline[ArrowInside=->](6.,4.)(8.,4.)

\psline[ArrowInside=->](-10.,0.)(-9.,0.)
\psline[ArrowInside=->](-9.,0.)(-8.,0.)
\psline[ArrowInside=->](-8.,0.)(-7.,0.)
\psline[ArrowInside=->](-7.,0.)(-6.,0.)
\psline[ArrowInside=->](-6.,0.)(-5.,0.)
\psline[ArrowInside=->](-5.,0.)(-4.,0.)
\psline[ArrowInside=->](-4.,0.)(-3.,0.)
\psline[ArrowInside=->](-3.,0.)(-2.,0.)
\psline[ArrowInside=->](-2.,0.)(-1.,0.)
\psline[ArrowInside=->](-1.,0.)(0.,0.)
\psline[ArrowInside=->](0.,0.)(1.,0.)
\psline[ArrowInside=->](1.,0.)(2.,0.)
\psline[ArrowInside=->](2.,0.)(3.,0.)
\psline[ArrowInside=->](3.,0.)(4.,0.)
\psline[ArrowInside=->](4.,0.)(5.,0.)
\psline[ArrowInside=->](5.,0.)(6.,0.)
\psline[ArrowInside=->](6.,0.)(7.,0.)
\psline[ArrowInside=->](7.,0.)(8.,0.)
\psline[ArrowInside=->](8.,0.)(9.,0.)
\psline[ArrowInside=->](9.,0.)(10.,0.)
\psline[ArrowInside=->](-10.,-4.)(-8.,-4.)
\psline[ArrowInside=->](-8.,-4.)(-6.,-4.)
\psline[ArrowInside=->](-6.,-4.)(-4.,-4.)
\psline[ArrowInside=->](-4.,-4.)(-2.,-4.)
\psline[ArrowInside=->](-2.,-4.)(0.,-4.)
\psline[ArrowInside=->](0.,-4.)(2.,-4.)
\psline[ArrowInside=->](2.,-4.)(4.,-4.)
\psline[ArrowInside=->](4.,-4.)(6.,-4.)
\psline[ArrowInside=->](6.,-4.)(8.,-4.)
\psline[ArrowInside=->](8.,-4.)(10.,-4.)
\psline[ArrowInside=->](-8.,4.)(-8.,2.7)
\psline[ArrowInside=->](-8,2.7)(-8,1.3)
\psline[ArrowInside=->](-8.,1.3)(-8.,0.)
\psline[ArrowInside=->](-4.,4.)(-4.,2.7)
\psline[ArrowInside=->](-4,2.7)(-4,1.3)
\psline[ArrowInside=->](-4.,1.3)(-4.,0.)
\psline[ArrowInside=->](0.,1.3)(0.,0.)
\psline[ArrowInside=->](0.,4.)(0.,2.7)
\psline[ArrowInside=->](0,2.7)(0,1.3)
\psline[ArrowInside=->](0.,1.3)(0.,0.)
\psline[ArrowInside=->](4.,4.)(4.,2.7)
\psline[ArrowInside=->](4,2.7)(4,1.3)
\psline[ArrowInside=->](4.,1.3)(4.,0.)
\psline[ArrowInside=->](8.,4.)(8.,2.7)
\psline[ArrowInside=->](8,2.7)(8,1.3)
\psline[ArrowInside=->](8.,1.3)(8.,0.)
\psline[ArrowInside=->](10.,-4.)(10.,-2.7)
\psline[ArrowInside=->](10,-2.7)(10,-1.3)
\psline[ArrowInside=->](10.,-1.3)(10.,0.)
\psline[ArrowInside=->](-10.,-4.)(-10.,-2.7)
\psline[ArrowInside=->](-10,-2.7)(-10,-1.3)
\psline[ArrowInside=->](-10.,-1.3)(-10.,0.)
\psline[ArrowInside=->](-6.,-4.)(-6.,-2.7)
\psline[ArrowInside=->](-6,-2.7)(-6,-1.3)
\psline[ArrowInside=->](-6.,-1.3)(-6.,0.)
\psline[ArrowInside=->](-2.,-1.3)(-2.,0.)
\psline[ArrowInside=->](-2,-2.7)(-2,-1.3)
\psline[ArrowInside=->](-2.,-4.)(-2.,-2.7)
\psline[ArrowInside=->](2.,-4.)(2.,-2.7)
\psline[ArrowInside=->](2,-2.7)(2,-1.3)
\psline[ArrowInside=->](2.,-1.3)(2.,0.)
\psline[ArrowInside=->](6.,-4.)(6.,-2.7)
\psline[ArrowInside=->](6,-2.7)(6,-1.3)
\psline[ArrowInside=->](6.,-1.3)(6.,0.)

\psline[ArrowInside=->](2.,12.)(-0.48,10.)
\psline[ArrowInside=->](2.,12.)(0.48,10.)
\psline[ArrowInside=->](2.,12.)(1.5,10.)
\psline[ArrowInside=->](2.,12.)(2.5,10.)
\psline[ArrowInside=->](2.,12.)(3.52,10.)

\psline[ArrowInside=->](-5.56,6.)(-8.,4.)
\psline[ArrowInside=->](-2.52,6.)(-4.,4.)
\psline[ArrowInside=->](0.44,6.)(0.,4.)
\psline[ArrowInside=->](3.56,6.)(4.,4.)
\psline[ArrowInside=->](6.52,6.)(8.,4.)
\end{scriptsize}

\rput(-4.,5.7){$F_{0}^-$}
\rput(-1,5.7){$F_{2}^-$}
\rput(+2.0,5.7){$F_{4}^-$}
\rput(+5.2,5.7){$F_{6}^-$}
\rput(-6,2){$F_{f-1}^-$}
\rput(-2,2){$F_{f+1}^-$}
\rput(+2,2){$F_{f+3}^-$}
\rput(+6,2){$F_{f+5}^-$}
\rput(-8,-2){$F_{f-2}^+$}
\rput(-4,-2){$F_{f}^+$}
\rput(0,-2){$F_{f+2}^+$}
\rput(+4,-2){$F_{f+4}^+$}
\rput(+8,-2){$F_{f+6}^+$}
\rput(-6.2,-6.3){$F_{-1}^+$}
\rput(-3.2,-6.3){$F_{1}^+$}
\rput(+0.0,-6.3){$F_{3}^+$}
\rput(+3.2,-6.3){$F_{5}^+$}
\rput(+6.2,-6.3){$F_{7}^+$}
\end{pspicture*}
\end{center}
  \caption{Face pairing polyhedron for $\mathcal{G}^{2/5}(n,f)$\label{fig:P25(n)}}
\end{figure}
\end{altproof}

We now consider the structures of the manifolds $M^{k/l}(n,f)$ of Theorem \ref{thm:Gkl(n,f)spine}.

\begin{theorem}\label{thm:commensurablehyperbolic}
Let $n\geq 2$, $k,l\geq 1$, $0\leq f<n$, and $fk\equiv 0\bmod n$, where $n,f$ are even, and suppose $(k,l)\in\{ (k,1), (1,l),$ $(5,2),(2,5) \}$. Then $M^{k/l}(n,f)$ is hyperbolic if and only if $M^{k/l}(n,0)$ is hyperbolic.
\end{theorem}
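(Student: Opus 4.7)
The strategy is to exploit the $f$-independence of the shift extension, already set up in the discussion preceding the theorem. For any $f$ with $fk\equiv 0\bmod n$ the retraction $\nu^{f}:E^{k/l}(n)\to\Z_{n}$ has kernel $G^{k/l}(n,f)$, so both $G^{k/l}(n,0)$ and $G^{k/l}(n,f)$ sit as normal subgroups of index $n$ inside the same ambient group $E^{k/l}(n)$. In particular they are commensurable (sharing a common finite-index overgroup), and the intersection $H=G^{k/l}(n,0)\cap G^{k/l}(n,f)$ has finite index (at most $n^{2}$) in each.

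Under the hypotheses, Theorem~\ref{thm:Gkl(n,f)spine} gives that $\mathcal{G}^{k/l}(n,f)$ is a spine of $M^{k/l}(n,f)$, so $\pi_{1}(M^{k/l}(n,f))=G^{k/l}(n,f)$, and similarly for $f=0$. Standard covering-space theory then yields a closed, connected, oriented 3-manifold $N$ with $\pi_{1}(N)=H$ that is a finite cover of both $M^{k/l}(n,0)$ and $M^{k/l}(n,f)$. (Here $H$ is torsion-free as a subgroup of the torsion-free 3-manifold group $\pi_{1}(M^{k/l}(n,0))$, so the associated cover is indeed a manifold.)

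The final step is to invoke the fact that hyperbolicity is a commensurability invariant among closed orientable 3-manifolds. A finite cover of a closed hyperbolic 3-manifold is hyperbolic, and conversely any closed orientable 3-manifold that is finitely covered by a closed hyperbolic 3-manifold is itself hyperbolic — this follows from Mostow rigidity (which lifts the deck-transformation action on $N=\mathbb{H}^{3}/H$ to an isometric action on $\mathbb{H}^{3}$), or equivalently from Perelman's geometrization theorem together with the observation that being word-hyperbolic and containing no $\Z^{2}$ subgroup are invariants of the commensurability class. Applying this to the common finite cover $N$ gives that $M^{k/l}(n,0)$ is hyperbolic if and only if $M^{k/l}(n,f)$ is hyperbolic. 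The only non-formal ingredient is this commensurability invariance of hyperbolicity for closed oriented 3-manifolds, which is the standard deep input and the main obstacle; once it is quoted, the remainder of the argument is an unpacking of the $f$-independence of $E^{k/l}(n)$ and the translation to a common finite cover.
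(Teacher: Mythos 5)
Your overall strategy—exploit the fact that $G^{k/l}(n,0)$ and $G^{k/l}(n,f)$ are both index-$n$ kernels inside the common shift extension $E^{k/l}(n)$, hence commensurable—is sound, but there is a genuine gap in the step where you produce \emph{one} closed $3$-manifold $N$ with $\pi_1(N)=H$ that finitely covers both $M^{k/l}(n,0)$ and $M^{k/l}(n,f)$. Covering-space theory gives you two covers, $N_0\to M^{k/l}(n,0)$ and $N_f\to M^{k/l}(n,f)$, corresponding to the finite-index subgroup $H=G^{k/l}(n,0)\cap G^{k/l}(n,f)$ of each fundamental group; these are closed orientable $3$-manifolds with isomorphic fundamental groups, but nothing you have said identifies them as homeomorphic. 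Closed $3$-manifolds are not in general determined by $\pi_1$, and in the direction you must prove ($M^{k/l}(n,0)$ hyperbolic $\Rightarrow$ $M^{k/l}(n,f)$ hyperbolic) you cannot invoke Mostow rigidity to identify $N_f$ with $N_0=\mathbb{H}^3/H$, because Mostow compares two \emph{hyperbolic} manifolds and you do not yet know $N_f$ is hyperbolic; for the same reason ``lifting the deck-transformation action on $N=\mathbb{H}^3/H$'' is circular. (The remark that $H$ is torsion-free ``as a subgroup of the torsion-free $3$-manifold group'' is also unjustified in general—closed orientable $3$-manifold groups can have torsion—though it is harmless, since any cover of a manifold is a manifold.) The repair is exactly the geometrization route you mention parenthetically, and it bypasses the common cover entirely: since $\pi_1(M^{k/l}(n,0))$ and $\pi_1(M^{k/l}(n,f))$ share the finite-index subgroup $H$, if $M^{k/l}(n,0)$ is hyperbolic then $\pi_1(M^{k/l}(n,f))$ is infinite, one-ended and contains no $\Z\oplus\Z$ (all commensurability invariants), so $M^{k/l}(n,f)$ is irreducible and aspherical by Kneser--Milnor together with the Poincar\'e conjecture, and is then hyperbolic by Perelman's hyperbolization theorem. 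So your argument closes, but only through geometrization, not through the common-cover/Mostow picture as written.

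For comparison, the paper travels in the opposite direction along the subgroup lattice: rather than descending to the intersection $H$, it ascends to the shared shift extension $E^{k/l}(n)$. After checking that the shift automorphism has order $n$ (citing \cite{ChinyereWilliamsFF}), it uses Mostow rigidity, as in \cite{Maclachlan}, to extend the discrete faithful representation of $G^{k/l}(n,0)$ in $PSL(2,\mathbb{C})$ to a representation of all of $E^{k/l}(n)$, and then restricts to the other kernel $G^{k/l}(n,f)$; the converse follows by symmetry. That version stays within Kleinian group theory at the cost of needing the order of the shift, whereas your (repaired) version needs no information about the shift but leans on geometrization. Both arguments share the final, lightly-stated step of passing from ``$\pi_1(M^{k/l}(n,f))$ is a cocompact torsion-free Kleinian group'' to ``$M^{k/l}(n,f)$ is hyperbolic,'' which itself requires knowing that a closed orientable $3$-manifold is determined, among aspherical ones, by its fundamental group.
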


\begin{proof}
By \cite[Corollary 3.3]{ChinyereWilliamsFF} the shift $\theta_{G^{k/l}(n,0)}$ has order $n$, and hence the shift $\theta_{G^{k/l}(n,f)}$ also has order $n$. Suppose $M^{k/l}(n,0)$ is hyperbolic. Then since, by Theorem \ref{thm:Gkl(n,f)spine}, $M^{k/l}(n,0)$ is a closed, connected, orientable 3-manifold, $G^{k/l}(n,0)$ is a subgroup of $\mathrm{Isom}^+(H^3)\cong PSL(2,\mathbb{C})$. As in the proof of \cite[Theorem 3.1]{Maclachlan} (see also \cite[Theorem 3.1]{CRStopprop}, \cite[Theorem 3.1]{CRS}, \cite[Theorem 3.1]{BV}) Mostow rigidity implies that $E^{k/l}(n)$ is a subgroup of $PSL(2,\mathbb{C})$. Hence $G^{k/l}(n,f)$ is a subgroup of $PSL(2,\mathbb{C})$, and so $M^{k/l}(n,f)$ is hyperbolic. Repeating the argument with the roles of $G^{k/l}(n,0)$ and $G^{k/l}(n,f)$ interchanged and the roles of $M^{k/l}(n,0)$ and $M^{k/l}(n,f)$ interchanged proves the converse.
\end{proof}

\begin{remark}\label{rem:hyperboliccommensurable}
The argument of the proof of Theorem \ref{thm:commensurablehyperbolic} holds in the more general setting of \cite{BogleyShift}. Namely, if $\nu^{f_1},\nu^{f_2}:\pres{x,t}{t^n,W(x,t)}\rightarrow \pres{t}{t^n}$ ($n\geq 2$) are retractions given by $\nu^{f_1}(t)=\nu^{f_2}(t)=t$ and $\nu^{f_1}(x)=t^{f_1}, \nu^{f_2}(x)=t^{f_2}$ then $K_1=\mathrm{ker} (\nu^{f_1})$, $K_2=\mathrm{ker} (\nu^{f_2})$ have cyclic presentations $\mathcal{G}_n(\rho^{f_1}(W(x,t))), \mathcal{G}_n(\rho^{f_2}(W(x,t)))$, respectively (as defined in \cite[page 159]{BogleyShift}). Suppose that the shift automorphism has order $n$ for either (and hence both) of these groups, and that $K_1,K_2$ are fundamental groups of closed, connected, orientable $3$-manifolds $M_1,M_2$, respectively. Then $M_1$ is hyperbolic if and only if $M_2$ is hyperbolic. (Theorem \ref{thm:commensurablehyperbolic} corresponds to the case $W(x,t)=x^ltx^ktx^{-l}t^{-2}$, as in (\ref{eq:extension}), where $(k,l)\in\{ (k,1),(1,l),(5,2),(2,5) \}$.) We record that, since $K_1$ is finite if and only if $K_2$ is finite, it is also the case that $M_1$ is spherical if and only if $M_2$ is spherical.
\end{remark}

The significance of Theorem \ref{thm:commensurablehyperbolic} is that the manifolds $M^{k/l}(n,0)$ are known to be hyperbolic in many cases, as we now describe. By \cite[Corollary 2.1]{KimVesninFF}, if $n\geq 6$ is even then 
$M^{k/l}(n,0)$ is hyperbolic for all but finitely many pairs of coprime integers  $k,l$, and it is conjectured (\cite[page 658]{KimVesninFF}) that $n=6$ and $k=l=1$ is the only non-hyperbolic case (which is an affine Riemannian manifold by \cite[Proposition 6]{HKM}). Moreover, $M^{k/l}(n,0)$ is hyperbolic in each of the following cases: $k\geq 2,l=1$, $n\geq 6$ and even \cite[Theorem 3]{MaclachlanReid}; $k=1,l\geq 2$, $n\geq 6$ and even \cite[Corollary 3.5]{KimVesninFF}; $k=l=1$, $n\geq 8$ and even \cite[Theorem C]{HKM}. If $n=4$ and $k=1$ then $M^{k/l}(n,0)=M^{1/l}(4,0)$ is the lens space $L(4l^2+1,2l)$ and $G^{1/l}(4,0)\cong \Z_{4l^2+1}$ by \cite[Corollary 3.4]{KimVesninFF}. For $k\geq 2$ the manifolds $M^{k/1}(4,0)$ are not hyperbolic and are described in \cite[pages 170--171]{MaclachlanReid}. In the next theorem we consider the manifolds $M^{k/l}(4,f)$ where $k\geq 2$.

\begin{theorem}\label{thm:n=4}
Let $k\geq 2, l\geq 1$, where $\gcd(k,l)=1$ and suppose $fk\equiv 0 \bmod 4$. Then $G^{k/l}(4,f)$ contains a $\Z\oplus \Z$ subgroup and hence $M^{k/l}(4,f)$ is not hyperbolic.
\end{theorem}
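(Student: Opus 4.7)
The strategy is to locate a $\Z\oplus\Z$ subgroup of the shift extension $E^{k/l}(4)=G^{k/l}(4,f)\rtimes_\theta\Z_4$, which by (\ref{eq:extension}) is independent of the admissible $f$ and has $G^{k/l}(4,f)$ as an index-$4$ subgroup; any such $\Z\oplus\Z$ inside $E^{k/l}(4)$ intersects $G^{k/l}(4,f)$ in another $\Z\oplus\Z$. The conclusion that $M^{k/l}(4,f)$ is not hyperbolic then follows at once, since no closed hyperbolic $3$-manifold group contains a $\Z\oplus\Z$.

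The first step is to extract the identity $t^2x^kt^{-2}=x^{-k}$ in $E^{k/l}(4)=\pres{x,t}{t^4,x^ltx^ktx^{-l}t^{-2}}$: rearranging the relator gives $tx^kt=x^{-l}t^2x^l$; squaring gives $(tx^kt)^2=x^{-l}t^4x^l=1$; and expanding $(tx^kt)^2=tx^kt^2x^kt$ and cancelling yields $x^kt^2x^k=t^{-2}=t^2$. Hence $\langle x^k,t^2\rangle$ is infinite dihedral once $x^k$ has infinite order. To verify the latter I compute $|F^{k/l}(4)^{\mathrm{ab}}|=\mathrm{Res}(p_0(t),t^4-1)=k^2(k^2+4l^2)$ using the representer $p_0(t)=l+kt-lt^2$, and observe that, since $\gcd(k,l)=1$, reduction of the $4\times 4$ circulant exponent matrix modulo $k$ has rank $2$ over $\Z/k$; so $F^{k/l}(4)^{\mathrm{ab}}$ has (at least) two invariant factors divisible by $k$. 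For $k\geq 2$ this forces $F^{k/l}(4)^{\mathrm{ab}}$ to have rank $\geq 2$ and order $\geq 32$, which is incompatible with the $H_1$ of any closed spherical $3$-manifold (whose abelianisation is either cyclic or isomorphic to $\Z/2\oplus\Z/2$, of order $4$). Since by \cite[Theorem~6]{RuiniSpaggiari98} $F^{k/l}(4)=\pi_1(M^{k/l}(4,0))$ is the fundamental group of a closed orientable $3$-manifold, non-sphericity forces $F^{k/l}(4)$ to be infinite, so $x^k$ has infinite order.

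To produce a second, commuting, infinite cyclic factor, the $l=1$ case is handled by the explicit description of $M^{k/1}(4,0)$ in \cite[pp.~170--171]{MaclachlanReid} as a non-hyperbolic Seifert fibred manifold whose fundamental group contains a $\Z\oplus\Z$ generated by a regular Seifert fibre together with a loop in the base. For general $l\geq 1$ coprime to $k$, the plan is to establish non-hyperbolicity of $M^{k/l}(4,0)$ directly (for instance by locating an essential torus in the face-pairing polyhedral models of Section~\ref{sec:Gnkl}, or, in those cases covered by Theorem~\ref{thm:Gkl(n,f)spine}, by tracking the $t^2x^kt^{-2}=x^{-k}$ identification through the vertex cycles in Figures~\ref{fig:Pk1(n)}--\ref{fig:P25(n)}), and then to apply Perelman geometrisation to the closed, orientable, non-spherical manifold $M^{k/l}(4,0)$: a closed orientable $3$-manifold that is neither spherical nor hyperbolic necessarily contains either a JSJ torus or a Seifert fibering with non-spherical base, each of which supplies a $\Z\oplus\Z\le F^{k/l}(4)$. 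The main obstacle is precisely this uniform non-hyperbolicity for $l\geq 2$, which is not immediate from the purely algebraic identity $t^2x^kt^{-2}=x^{-k}$ alone. Once the $\Z\oplus\Z$ in $F^{k/l}(4)\leq E^{k/l}(4)$ is in hand, the intersection with the index-$4$ subgroup $G^{k/l}(4,f)$ yields the required $\Z\oplus\Z\leq G^{k/l}(4,f)$, and the non-hyperbolicity of $M^{k/l}(4,f)$ is immediate.
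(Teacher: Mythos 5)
Your proposal has a genuine gap at its core, and you in fact flag it yourself: for $l\geq 2$ you propose to first establish non-hyperbolicity of $M^{k/l}(4,0)$ ``directly'' and then run geometrisation/JSJ backwards to extract a $\Z\oplus\Z$. But non-hyperbolicity is precisely the conclusion of the theorem; the intended logical direction is to exhibit the $\Z\oplus\Z$ subgroup first and deduce non-hyperbolicity from it. No mechanism is supplied for finding the essential torus or Seifert structure you would need, so the argument is circular exactly where it matters. Two subsidiary steps are also shaky: (i) the identity $x^kt^2x^k=t^2$ gives only an infinite dihedral (not $\Z\oplus\Z$) configuration, as you note; and (ii) ``$F^{k/l}(4)$ is infinite, so $x^k$ has infinite order'' is a non sequitur, and the preceding claim that a spherical $3$-manifold has $H_1$ cyclic or $\Z/2\oplus\Z/2$ is false (prism manifolds realise $\Z/2\oplus\Z/{2m}$). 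Finally, the theorem is asserted for all coprime $k\geq 2$, $l\geq 1$, whereas the manifold-based route would at best cover the cases where a closed manifold $M^{k/l}(4,f)$ has actually been constructed.

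The paper's proof avoids all of this by working purely algebraically inside the shift extension $E=E^{k/l}(4)$: it shows that $A=\langle x^k,\ tx^kt^{-1}\rangle\cong\Z\oplus\Z$, and since $\nu^f(x^k)=\nu^f(tx^kt^{-1})=1$ this subgroup already lies in $\mathrm{ker}(\nu^f)=G^{k/l}(4,f)$ (no intersection with an index-$4$ subgroup is needed). The freeness and commutativity are obtained by decomposing $F^{k/l}(4)$ as an amalgamated free product $G_1*_HG_2$ with $G_i=\pres{x_i,x_{i+2}}{x_i^kx_{i+2}^k}$ and $H\cong\Z^2$, and verifying that the amalgamating maps are injective by passing to the kernel of $G_1\rightarrow\Z_k$ and mapping onto a rank-$2$ subgroup of $\Z^k$; non-hyperbolicity of $M^{k/l}(4,f)$ then follows from the standard fact that closed hyperbolic $3$-manifold groups contain no $\Z\oplus\Z$. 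If you want to salvage your approach, the missing ingredient is exactly such a direct algebraic certificate that $x^k$ and a suitable conjugate generate a copy of $\Z\oplus\Z$.
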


\begin{proof}
We show that the subgroup $A$ of $E=E^{k/l}(4)$ generated by $x^k, tx^kt^{-1}$ is free abelian of rank 2. Then, since $\nu^f(x^k)=1$ and $\nu^f(tx^kt^{-1})=1$, $A=\Z\oplus\Z$ is a subgroup of $\mathrm{ker}(\nu^f)=G^{k/l}(4,f)$, and hence $M^{k/l}(4,f)$ is not hyperbolic by \cite[Theorem 3.3]{CallahanReid}.

Let $G=\mathrm{ker}(\nu^0)$, so $G=F^{k/l}(4)$ and is generated by $x_i=t^ixt^{-i}$, and let $G_1=\pres{x_0,x_2}{x_0^kx_2^k}$, $G_2=\pres{x_1,x_3}{x_1^kx_3^k}$, $H=\pres{a,b}{ab=ba}\cong \Z^2$. Let $\phi_1:H\rightarrow G_1$, $\phi_2:H\rightarrow G_2$, be given by $\phi_1(a)=x_0^k$, $\phi_1(b)=x_0^{-l}x_2^l$, $\phi_2(a)=x_3^{-l}x_1^l$, $\phi_2(b)=x_1^k$. We shall show that $\phi_1,\phi_2$ are injections. This implies that $G$ can be expressed as an amalgamated free product $G=G_1*_H G_2$, and $\phi_1(a)=x_0^k,\phi_1(b)=x_0^{-l}x_2^l$ generate a free abelian subgroup of $G$ (compare \cite[page 171]{MaclachlanReid}, which deals with the case $l=1$). That is, $x_0^k$ and $x_1^k$ generate a free abelian subgroup of $G$ and so $x^k$ and $tx^kt^{-1}$ generate a free abelian subgroup of $E$, as required.

We show that $\phi_1$ is an injection, the argument for $\phi_2$ being similar. Let $\alpha: G_1\rightarrow \Z_k$ be given by $\alpha(x_0)=1,\alpha(x_2)=1\in \Z_k$. Then $K=\mathrm{ker}(\alpha)$ is generated by $z=x_0^k$, $g_i=x_0^ix_2x_0^{-(i+1)}$ ($0\leq i\leq k-2$), $g_{k-1}=x_0^{k-1}x_2$, and has a presentation
\[ \pres{z,g_0,\ldots , g_{k-1}}{zg_0g_1\ldots g_{k-2}g_{k-1}, zg_1g_2\ldots g_{k-1}g_{0}, \ldots , zg_{k-1}g_0\ldots g_{k-3}g_{k-2}}. \]
Now let $\theta:K\rightarrow \Z^k$ be an epimorphism given by $\theta (g_1)=(1,0,\ldots ,0)$, $\theta (g_2)=(0,1,\ldots ,0), \ldots ,$ $\theta (g_{k-1})=(0,0,\ldots ,1)$, and $\theta(z)=(-1,-1,\ldots , -1)$. Let $l=qk+r$ where $q\geq 0$, $0\leq r<k$ and note that $r\neq 0$, since $\gcd(k,l)=1$. Then $x_0^{-l}x_2^l= (x_0^k)^{-q}x_0^{-r}(x_2^k)^qx_2^r=(x_0^k)^{-q}x_0^{-r}(x_0^{-k})^qx_2^r=z^{-2q-1} g_{k-r}g_{k-(r-1)}\ldots g_{k-1}$ and $x_2^k=z$. Therefore 
\begin{alignat*}{1}
\theta (x_0^{-l}x_2^l)&=(2q+1)(1,1,\ldots ,1)+(0,\ldots ,0,1,\ldots ,1)\\
&=(2q+1,\ldots ,2q+1,2q+2,\ldots ,2q+2)
\end{alignat*}
and $\theta (x_2^k)=(-1,-1,\ldots , -1)$ so $\theta (x_0^{-l}x_2^l), \theta (x_2^k)$ generate a free abelian subgroup of rank 2 in $\Z^k$. Moreover, in $G_1$, $x_0^{-l}x_2^l$ and $x_2^k$ commute and so generate a free abelian subgroup of $G_1$, and hence $\phi_1$ is injective.
\end{proof}

\section*{Acknowledgements}

I thank Jim Howie for insightful comments on a draft of this article and Alessia Cattabriga for interesting discussions relating to this work, and for bringing pertinent results from \cite{Minkus} and \cite{RuiniSpaggiari98} to my attention. I thank the referee for pointing out that \cite[Theorem 8]{McDermott} provides a family of cyclic presentations that are 3-manifold spines where the Whitehead graph is of type (I.5), and for other valuable suggestions that improved the paper.

\begin{Backmatter}

\printaddress

\end{Backmatter}
\end{document}